\newtheorem{thm}{Theorem}[section]
\newtheorem{lem}[thm]{Lemma}
\newtheorem{prop}[thm]{Proposition}
\newtheorem{clm}[thm]{Claim}
\theoremstyle{remark}
\newtheorem{rmk}[thm]{Remark}
\newtheorem{ntn}[thm]{Notation}
\newtheorem{obs}[thm]{Observation}
\theoremstyle{definition}
\newtheorem{dfn}[thm]{Definition}
\newtheorem{Def}[thm]{Definition}
\newcommand{\la}{\left <}
\newcommand{\ra}{\right >}
\newcommand{\uphp}{\upharpoonright}
\newcommand{\ov}{\overline}
\newcommand{\pl}{\shortparallel}
\newcommand{\cdd}{\cdots}
\newcommand{\etb}{\bar{\eta}}
\newcommand{\nub}{\bar{\nu}}
\newcommand{\leftexp}[2]{{\vphantom{#2}}^{#1}{#2}}
\newcommand{\W}{\leftexp{\omega >}{\omega}}
\newcommand{\B}{\leftexp{\beta >}{\lambda}}
\newcommand{\CM}{\mathcal{M}}
\newcommand{\nin}{\noindent}
\newcommand{\be}{\begin{enumerate}}
\newcommand{\ee}{\end{enumerate}}
\newcommand{\ssk}{\smallskip}
\newcommand{\bs}{\bigskip}
\newcommand{\ms}{\medskip}
\newcommand{\ya}{\vDash}
\newcommand{\imp}{\rightarrow}
\newcommand{\sbb}{\subseteq}
\newcommand{\lteq}{\trianglelefteq}
\newcommand{\al}{\alpha}
\newcommand{\kk}{\kappa}
\newcommand{\lam}{\lambda}
\newcommand{\gam}{\gamma}
\newcommand{\og}{\omega}
\newcommand{\xb}{\bar{x}}
\newcommand{\yb}{\bar{y}}
\newcommand{\cb}{\bar{c}}
\newcommand{\fr}{\smallfrown}
\newcommand{\s}{\textrm{s}}
\newcommand{\n}{\textrm{str}}
\newcommand{\len}{<_{\textrm{len}}}
\newcommand{\lx}{<_{\textrm{lex}}}
\newcommand{\qt}{\textrm{qftp}}
\newcommand{\ar}{\rightarrow}
\newcommand{\I}{\mathcal{I}}
\newcommand{\bij}{\hookrightarrow}
\newcommand{\gm}{\gamma}
\def\tp{\operatorname{tp}}
\newcommand{\cc}{\mathbf{c}}
\title{Tree indiscernibilities, revisited}
\author{Byunghan Kim}
\thanks{The first author was supported by an NRF grant 2011-0021916.}
\author{Hyeung-Joon Kim}
\thanks{The second author was supported by the second phase of the Brain Korea 21 Program in 2011.}
\author{Lynn Scow}
\thanks{The third author was supported by the NSF-AWM Mentoring Travel Grant}
\begin{document}

\begin{abstract}
We give definitions that distinguish between two notions of indiscernibility for a set $\{a_\eta \mid \eta \in \W\}$ that saw original use in \cite{sh90}, which we name \textit{$\s$-} and \textit{$\n$-indiscernibility}.  Using these definitions and detailed proofs, we prove $\s$- and $\n$-modeling theorems and give applications of these theorems.  In particular, we verify a step in the argument that TP is equivalent to TP$_1$ or TP$_2$ that has not seen explication in the literature.  In the Appendix, we exposit the proofs of \citep[{App. 2.6, 2.7}]{sh90}, expanding on the details. 
\end{abstract}

\maketitle

\section{Introduction}

Many classification-theoretic properties can be stated in terms of the existence of an infinite set of witnesses to some ``forbidden'' graph configuration, where the edge relation is some definable relation in the theory.  The following properties are all such examples: being unstable, having the independence property, having the tree property (being non-simple), having TP$_1$, TP$_2$, or the SOP$_n$, for $n = 1,2$.  Being able to choose ``very homogeneous'' witnesses to the definable configuration, whenever witnesses exist, is a very powerful tool. We look at unstable theories as an example of this situation.  A theory $T$ is \emph{unstable} just in case it has the \emph{order property}, i.e. there exist some formula $\varphi(\ov{x};\ov{y})$ in the language of $T$, and some infinite set of finite tuples $\{ \la \ov{a}_i, \ov{b}_i \ra \mid i<\omega\}$ from the monster model $\CM$ such that $\vDash \varphi(\ov{a}_i;\ov{b}_j) \Leftrightarrow i<j$.  The correct notion of a ``very homogeneous'' infinite set in this case is that of an \emph{order-indiscernible sequence}, which is a sequence of parameters $(\ov{c}_i \mid i \in I)$, indexed by some linear order $I$, such that $\vDash \psi(\ov{c}_{i_1}, \ldots, \ov{c}_{i_n}) \leftrightarrow \psi(\ov{c}_{j_1}, \ldots, \ov{c}_{j_n})$, whenever $i_1< \ldots < i_n$ and $j_1 < \ldots < j_n$ are in $I$ and $\psi(\xb_1, \cdd, \xb_n)$ is any formula in the language.  By Ramsey's theorem (and compactness), we may always choose an indiscernible sequence of witnesses to the order property in any unstable theory.

Suppose we consider the linear order $I$ as a structure in its own right, a set with a binary relation, $\I := (I,<)$.  Then a set $\{c_i : i \in \I\}$ of parameters from $\CM$ is order-indiscernible just in case for any $n$ and $(i_1, \ldots, i_n), (j_1, \ldots, j_n)$ from $I$ with the same quantifier-free type in $\I$, we have that $(c_{i_1}, \ldots, c_{i_n})$ and $(c_{j_1}, \ldots, c_{j_n})$ share the same complete type in $\CM$.  Once viewed from this perspective, one may have as many notions of indiscernibility as there are useful index structures to serve in the place of $\I$, as was first pointed out in \cite{sh90}.  If the set of indiscernible parameters is indexed by a structure $\I$ that is a tree under some expansion of the language for partial orders, $\{\unlhd\}$, call it a tree-indexed indiscernible (or specifically, an $\I$-indexed indiscernible.)  Tree-indexed indiscernibles have been  studied in several places, among them \cite{dzsh04,bash12}.  A certain tree-indexed indiscernible was recently used in \cite{kim10} to show that TP$_1$ is equivalent to $k$-TP$_1$.

In this paper we give explicit definitions for two notions of tree-indexed indiscernibility that are used in \cite{sh90}, which we name $\s$- and $\n$-indiscernibility.  These notions are used in \citep[{Thm III.7.11}]{sh90} to prove that $k$-TP (for some $k\geq 2$) is equivalent to $2$-TP$_1$ or $k'$-TP$_2$, for some $k'$. (See subsection \ref{TPs} for the definitions of $k$-TP$_i$.)    Say that a set of $\I$-indexed indiscernibles $B = \{b_i \mid i \in \I\}$ is \emph{based on} a set $A = \{a_i \mid i \in \I \}$ if for any formula $\varphi$ in the language of $\CM$ and for any $(b_{j_1}, \ldots, b_{j_n})$ there exists  some $(a_{i_1}, \ldots, a_{i_n})$ so that $(b_{j_1}, \ldots, b_{j_n}) \equiv_\varphi (a_{i_1}, \ldots, a_{i_n})$ and $\qt(i_1,\ldots,i_n;\I) = \qt(j_1,\ldots,j_n;\I)$.  (See Definition \ref{50}.)  In this sense,   $\{b_i \mid i \in \I \}$ is ``finitely modeled'' on  $\{ a_i \mid i\in \I \}$.  We say $\I$-indexed indiscernibles have the \emph{modeling property} if for any set of parameters $A = \{a_i \mid i \in \I \}$ we may find an $\I$-indexed indiscernible set $B = \{b_i \mid i \in \I\}$ based on $A$.

That $\s$- and $\n$-indiscernible sets have the modeling property is the content of Theorems \ref{s-model} and \ref{1-model}, the \emph{$\s$- and $\n$-modeling theorems}.  These claims are implicit in \citep[{Thms III.7.11, VII.3.6}]{sh90}. However explicit proofs do not appear to be present, so we supply them here.  The interested reader can read alternate proofs of the $\s$- and $\n$-modeling theorems in \cite{tats12} that rely on the fact exposited in Theorem \ref{1}.  Moreover, we define $\s$- and $\n$\emph{-type properties}, and prove in Theorem \ref{main2} that such properties are always modeled by $\s$- and $\n$-indiscernibles that are based on parameters witnessing those properties. 

It would be wrong to construe the following Claim \ref{250} from the proof of \citep[{Thm III.7.11}]{sh90}, where $\n$-indiscernible witnesses to TP are obtained from $\s$-indiscernible witnesses citing Ramsey's theorem and compactness.

\begin{clm}\label{250} Suppose $\varphi(x ; y)$ is a formula witnessing $k$-TP with some $\s$-indiscernible parameters $\{ a_\eta\mid \eta\in \leftexp{\omega >}{\omega} \}$. If  $\{ b_\eta\mid \eta\in \leftexp{\omega >}{\omega} \}$ is any set of $\n$-indiscernible parameters $\n$-based on $\{ a_\eta\}_\eta$, then $\varphi (x;y)$ and $\{ b_\eta \}_\eta$ must witness $k'$-TP for some $k'$.
\end{clm}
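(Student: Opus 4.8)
The plan is to verify directly that $\varphi(x;y)$ together with $\{b_\eta\}_\eta$ witnesses $k'$-TP for some finite $k'$ --- that is, that $\{\varphi(x;b_{\sigma\uphp n}):n<\omega\}$ is consistent for every $\sigma\in\leftexp{\omega}{\omega}$, while $\{\varphi(x;b_{\eta\frown i}):i<\omega\}$ is $k'$-inconsistent for every $\eta\in\W$. The first point is that $\n$-indiscernibility of $\{b_\eta\}_\eta$ collapses both requirements to a single instance: for fixed $m$ all tuples $(\sigma\uphp 0,\ldots,\sigma\uphp m)$ of initial segments of a branch realize one and the same quantifier-free type in the $\n$-structure, and likewise all $(m+1)$-element fans $(\eta\frown i_0,\ldots,\eta\frown i_m)$ with $i_0<\ldots<i_m$ realize one and the same such type; hence it is enough to exhibit one consistent branch and one $k'$-inconsistent fan, and I will work with the fan at the root.

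For the branch I would fix $m$ and feed the formula $\chi(y_0,\ldots,y_{m-1}):=\es x\,\bigwedge_{n<m}\varphi(x;y_n)$, together with the tuple $(b_{\sigma\uphp 0},\ldots,b_{\sigma\uphp(m-1)})$, into the hypothesis that $\{b_\eta\}_\eta$ is $\n$-based on $\{a_\eta\}_\eta$. This returns nodes $\nu_0\trl\ldots\trl\nu_{m-1}$ of $\W$ --- a $\trl$-chain, that being what the $\n$-type of an initial segment of a branch records --- with $(a_{\nu_0},\ldots,a_{\nu_{m-1}})\equiv_\chi(b_{\sigma\uphp 0},\ldots,b_{\sigma\uphp(m-1)})$. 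Extending $\nu_{m-1}$ to an element $\tau$ of $\leftexp{\omega}{\omega}$ displays the $\nu_l$ as initial segments of $\tau$, so $\{\varphi(x;a_{\nu_l}):l<m\}\subseteq\{\varphi(x;a_{\tau\uphp l}):l<\omega\}$, and the latter is consistent because $\varphi$ and $\{a_\eta\}_\eta$ witness $k$-TP. Thus $\models\chi(a_{\nu_0},\ldots,a_{\nu_{m-1}})$, hence $\models\chi(b_{\sigma\uphp 0},\ldots,b_{\sigma\uphp(m-1)})$; letting $m$ grow and invoking compactness disposes of the branch.

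For the fan it is enough (given the branch case and $\n$-indiscernibility) to show that $\{\varphi(x;b_{\la n\ra}):n<\omega\}$ is \emph{inconsistent}. If it were consistent, then $\models\chi(b_{\la 0\ra},\ldots,b_{\la m-1\ra})$ for every $m$, so $\n$-basedness would yield, for each $m$, nodes $\nu_0,\ldots,\nu_{m-1}$ of $\W$ realizing the $\n$-type of an $m$-element fan with $\{\varphi(x;a_{\nu_l}):l<m\}$ consistent; the task is then to contradict this once $m\ge k$, using $\s$-indiscernibility of $\{a_\eta\}_\eta$. Reading off the $\n$-type, the $\nu_l$ form an antichain whose pairwise meets all equal a common node $\mu$ and (since the $\n$-structure records levels) whose members all have one fixed length, so $\nu_l=\mu\frown\la c_l\ra\frown\tau_l$ with the $c_l$ pairwise distinct and $|\tau_l|$ independent of $l$. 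I would then normalize this tuple to a genuine root fan in three moves --- delete the common stem $\mu$, replace each equal-length tail $\tau_l$ by a constant tail, and then delete the tails --- checking that each move takes place between tuples of the \emph{same} $\s$-quantifier-free type (all along, the tuple is an antichain, all of whose pairwise meets coincide and all of whose members have equal length, the left-to-right order being governed by the $c_l$). By $\s$-indiscernibility it follows that $\{\varphi(x;a_{\nu_l}):l<m\}$ is consistent iff $\{\varphi(x;a_{\la l\ra}):l<m\}$ is; but the latter is inconsistent for $m\ge k$, since $\varphi$ and $\{a_\eta\}_\eta$ witness $k$-TP. Taking $m=k$ gives the contradiction, so $\{\varphi(x;b_{\la n\ra}):n<\omega\}$ is inconsistent; by compactness and $\n$-indiscernibility it --- and hence every fan --- is $k'$-inconsistent for some finite $k'$ (running $m=k$ through the argument shows one may in fact take $k'=k$), which together with the branch case is exactly $k'$-TP for $\varphi$ and $\{b_\eta\}_\eta$.

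I expect the genuine obstacle to be this normalization, which is precisely the point that the casual appeal to ``Ramsey's theorem and compactness'' glosses over: the $\n$-type of a fan is realized inside $\{a_\eta\}_\eta$ not only by actual fans but by antichains strewn about below a common node, and one must verify that the clean-up to a genuine fan passes only through tuples of constant $\s$-quantifier-free type, since only then can $\s$-indiscernibility --- rather than the weaker $\n$-indiscernibility that $\{b_\eta\}_\eta$ itself enjoys --- be used to propagate $\varphi$-inconsistency. This is exactly where it matters that the original parameters $\{a_\eta\}_\eta$ are $\s$-indiscernible, and that the index structures keep track of the levels of the nodes.
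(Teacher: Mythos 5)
The statement you are asked to prove is \emph{false}: the paper states Claim~\ref{250} only in order to refute it, and Proposition~\ref{33} exhibits a counterexample in the theory $T^\ast_{\mathrm{feq}}$. The correct result is Lemma~\ref{252}, which adds the hypothesis that $\varphi$ does not witness $k$-TP$_2$ for any $k$; without that hypothesis the claim cannot be salvaged. Your branch-consistency argument is sound, but your fan-inconsistency argument contains the fatal gap, and it is worth pinpointing it because you have located exactly the right pressure point while misjudging which way it gives.

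The $\nu_l$ returned by $\n$-basedness realize the $\n$-type of a root fan: they are pairwise $\unlhd$-incomparable, lie at a single common level, and have all pairwise meets equal to one node $\mu$. But the $\n$-type contains no information about the \emph{actual} values of these levels, only comparisons via $\len$. So you get same-level \emph{distant siblings}: $\nu_l = \mu^\frown\la c_l\ra^\frown\tau_l$ where $\ell(\mu)$ and $\ell(\tau_l)$ can be anything. Your three normalization moves purport to reduce this to the genuine root fan $(\la 0\ra,\dots,\la m-1\ra)$ while staying within a single $\sim_\s$-class, but two of the moves change levels: deleting the stem $\mu$ drops every $\nu_l$ from level $\ell(\mu)+1+|\tau_l|$ to level $1+|\tau_l|$, and deleting the tails drops them again. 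Since $L_\s$ contains the unary predicates $(P_\alpha)_{\alpha<\omega}$, the level of each $\nu_l$ and of the shared meet $\nu_l\wedge\nu_{l'}$ is part of $\qt^{L_\s}$; tuples at different levels are never $\sim_\s$-equivalent, so $\s$-indiscernibility cannot be invoked across those moves. You note in your final paragraph that the index structures ``keep track of the levels of the nodes'' as though this enables the argument, but it is precisely what blocks it: the $\s$-type of $(\nu_l)_l$ is pinned to its particular $(s,t)$-configuration (meet level $s$, node level $t$), which is not $(0,1)$ in general, and $\s$-indiscernibility only lets you move among tuples with the \emph{same} $(s,t)$.

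There is also no way to finish by weaker means, because the content of $k$-TP concerns only \emph{siblings} (immediate successors of a common node, $t=s+1$). The configuration $\n$-basedness hands you has arbitrary $s<t$, and the hypothesis says nothing about $\varphi$-inconsistency there. Proposition~\ref{33} exploits exactly this: in $T^\ast_{\mathrm{feq}}$ one builds an $\s$-indiscernible $\{a_\eta\}$ witnessing $2$-TP such that $\bigwedge_i\varphi(x;a_{\eta_i})$ is consistent whenever no two $\eta_i$ are siblings; embedding the tree into itself so that no two image nodes are siblings (the map $h(\eta^\frown\la i\ra)=h(\eta)^\frown\la i\ra^\frown\la 0\ra$) and then applying Theorem~\ref{1-model} produces an $\n$-indiscernible $\{b_\eta\}$ that is $\n$-based on $\{a_\eta\}$ but enjoys strong $\varphi$-consistency, so it witnesses no $k'$-TP at all. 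The missing ingredient that Lemma~\ref{252} supplies is $k$-NTP$_2$, which forces distant-sibling inconsistency from sibling inconsistency and thereby closes the gap you ran into.
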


\nin This is because Claim \ref{250} is false, as we show in Proposition \ref{33}.  However, the conclusion of Claim \ref{250} is true under the added assumption of NTP$_2$, and so this step in the proof is valid.  We feel there is room to add the relevant details here, as it also serves to exposit the technology of tree-indexed indiscernibles.  In Lemma \ref{252}, we illustrate how the assumption of TP together with $k$-NTP$_2$ for all $k$, yields $\n$-indiscernible witnesses to TP.  The proof for Theorem \ref{tp12} summarizes the rest of the argument as it stands in the literature.

Note that the statement of  \citep[{Thm III.7.11}]{sh90} is as follows:  under the assumption of $k$-TP (for some $k\geq 2$),  the negation of $k'$-TP$_2$ for all $k'$ implies $2$-TP$_1$.  A modified statement is given in \cite{ad07}: under the assumption of $k$-TP (for some $k\geq 2$), the negation of $2$-TP$_2$ implies $2$-TP$_1$.  To obtain the modified statement, one uses the fact that 2-TP$_2$ is equivalent to $k$-TP$_2$.  This fact is argued for in \cite{ad07}. We repeat Adler's main argument in Proposition \ref{ktp2}, but we use the str-modeling theorem (Theorem \ref{1-model}) to justify the key ingredient of the proof,  namely that we may assume a given array to be `indiscernible' (see Theorem \ref{arraymodel} and Lemma \ref{main 3}). (That the str-modeling theorem can be used to simplify the argument here was pointed out to the authors by the anonymous reviewer of this paper, to whom we would like to express gratitude.)

We state notation and key propositions in section \ref{51}.  The definitions of $\s$- and $\n$-indiscernible sets are given in section \ref{34}.   
In section \ref{36} we argue for  the $\s$- and $\n$-modeling theorems, Theorems \ref{s-model} and \ref{1-model}.  In section \ref{37} we give formal definitions of $\s$- and $\n$-type properties, and give arguments for the main theorems, Theorem \ref{main2} and Theorem \ref{tp12}. In the Appendix, we supply detailed proofs of \citep[{App. 2.6, 2.7}]{sh90} for the interested reader.

We acknowledge helpful conversations with Thomas Scanlon on the subject of this paper.  We thank Artem Chernikov for suggesting Adler's paper.  We thank Miodrag Sokic for pointing out an error in the initial proofs in the Appendix.  We thank the reviewer for a careful reading of this paper and for pointing us to a simplified proof of the $\s$-modeling theorem that avoids the use of \citep[{App. 2.6, 2.7}]{sh90}, and also for suggesting simplifications of the proofs for Theorem \ref{arraymodel} and Lemma \ref{252} that eschew infinite combinatorics.

\section{Notation and conventions}\label{51}

We use standard notation. We work in a saturated model $\CM$ of a complete theory $T$ in a first-order language $L$, and $x, y, a, b, \ldots$ denote finite tuples.   We write $\vDash \varphi$ to denote that $\CM \vDash \varphi$.  Given a set $I$, a tuple $\ov{\imath}$ from $I$ is assumed to be finite unless said otherwise.  For a structure $\I$ we write $i \in \I$ when we mean $i$ is in the underlying set, $|\I|$.
For an indexed set of parameters $\{a_\eta \mid \eta\in \I\}$ and an $n$-tuple $\bar \eta$ from $\I$, let $\bar a_{\etb} := (a_{\eta_0}, \ldots, a_{\eta_{n-1}} )$, $(\etb)_l := \eta_l$, and $l(\etb):=n$.  We reserve $a\equiv b$ ($a\equiv_{\Delta} b$, resp.) to mean $\tp(a)=\tp(b)$
($\tp_{\Delta}(a)  =\tp_{\Delta}(b)$, resp.) as computed in $\CM$.  Given a structure $\I$ and a tuple $\ov{\eta}$ from $\I$, we let $\qt(\ov{\eta};\I)$ denote the quantifier-free type of $\ov{\eta}$ in $\I$ (namely, the set of all quantifier-free formulas in the language of $\I$ satisfied by $\ov{\eta}$ in $\I$.)  In the case that $I = |\I|$ for an $L'$-structure $\I$, by $\qt^{L'}(\ov{\eta};I)$ we mean $\qt(\ov{\eta};\I)$.  For a type $\Gamma$ in variables $\{x_i \mid i \in \I\}$ and an $\I$-indexed set $A =\{a_i \mid i \in \I\}$ by $A \vDash \Gamma$ we mean that $\Gamma$ is satisfied under the variable assignment $x_i \mapsto a_i$ for $i \in \I$.
For a set $X$ we write $\|X\|$ for the cardinality of $X$.    By $\leftexp{Y}{X}$ we mean the set of functions from $Y$ into $X$; for an ordinal $\beta$, $\leftexp{\beta >}{X} := \bigcup_{\al<\beta} \leftexp{\al}{X}$, $\leftexp{\beta \geq}{X} := \leftexp{\beta >}{X} \cup \leftexp{\beta}{X}$.  For $\eta \in \leftexp{\beta >}{X}$, $\ell(\eta)$ denotes the domain of $\eta$.

\subsection{Trees}

For the rest of the paper, we let $\lambda$ stand for an ordinal, and $\beta$ stand for an ordinal $\leq \omega$.  We consider $\leftexp{\omega >}{\lambda}$ as a tree under the usual partial ordering $\unlhd$: $\eta \vartriangleleft \nu$  $\Leftrightarrow$ $\eta$ is a proper initial segment of $\nu$.  Any tree $\unlhd$-isomorphic to ${}^{n\geq}\lambda$ is said to have a \emph{height $n$}. The {\em meet} of two elements $\eta, \nu \in \leftexp{\omega >}{\lambda}$ with respect to  $\unlhd$ is denoted by $\eta \wedge \nu$.   We let $\lx$ denote the lexicographic order on elements $\eta \in \leftexp{\omega >}{\lambda}$ as sequences.  i.e., $\eta \lx \nu$ means that either  $\eta  \vartriangleleft \nu$, or $\eta$ and $\nu$ are $\unlhd$-incomparable in such a way that, if $\alpha$ is the least ordinal such that $\eta(\alpha) \neq \nu(\alpha)$, then $\eta(\alpha) < \nu(\alpha)$. We shall write $\eta \len \nu$ to mean $\ell(\eta) < \ell(\nu)$.  We let $P_n(\leftexp{\omega >}{\lambda}) := \{ \eta \in \leftexp{\omega >}{\lambda} \mid \ell(\eta) = n\}$ and $P_{\leq n}(\leftexp{\omega >}{\lambda}) := \bigcup_{\al \leq n} P_\alpha(\leftexp{\omega >}{\lambda})$. The elements of $P_n(\leftexp{\omega >}{\lambda})$  are said to be \emph{on the $n$-th level of the tree $\leftexp{\omega >}{\lambda}$}.  For a cardinal $\chi$, a subtree $\I \sbb {}^{n\geq}\lam$ is said to be   ``occupying all levels $\leq n$ and $\chi$-branching" if $\la \ra \in \I$ and, for each $\eta \in \I \cap \leftexp{n >}{\lambda}$, $\|\{ \alpha < \lambda \mid \eta^{\fr}\la \alpha \ra \in \I \}\| = \chi$.

\ms

Now we define languages based on the above relations:

\begin{dfn}\label{lang} The following are languages under which $\leftexp{\omega >}{\lambda}$ may be considered as a structure, using the interpretations described in the paragraph  above.
\begin{itemize}
\item $L_{\s} = \{\unlhd, \wedge, \lx, (P_\alpha)_{\alpha<\omega}\}$ (the Shelah language)
\item $L_\n = \{\unlhd, \wedge, \lx, \len \}$ (the strong Shelah language)
\end{itemize}
\end{dfn}

\begin{ntn} For the remainder of the paper, we write $\s'$ to refer to \emph{either} of $\s$ or $\n$.
\end{ntn}

\begin{dfn}\label{struc}
\

\begin{enumerate}
\item By a \emph{subtree} of $\leftexp{\omega >}{\lambda}$ we mean a substructure in the language $\{\unlhd, \wedge\}$.
\item By the \emph{$L_{\s'}$-structure on $\leftexp{n >}{\lambda}$}, we mean the structure the set inherits as a substructure of $\leftexp{\omega >}{\lambda}$.
\end{enumerate}
\end{dfn}

\begin{rmk}\label{reduct} In Definition \ref{struc} note that any $L_{\s}$-substructure of $\leftexp{\omega>}{\lambda}$ has a natural expansion to a $\{\unlhd, \wedge, \lx, \len (P_\alpha)_{\alpha<\omega}\}$-structure, which in turn has an $L_\n$-reduct.  In this way, we can talk of taking $L_\n$-reducts of $L_{\s}$-structures.  Moreover, in any subtree of $\leftexp{\omega>}{\lambda}$, $\unlhd$ is definable using $\wedge$ and $=$, but we keep the relation for ease of reading.
\end{rmk}

Here we fix terminology for trees.  A tuple $\etb$ from a tree is said to be {\em meet-closed} if for each $i,j<l(\etb)$, there is $k< l(\etb)$ such that $\eta_k=\eta_i\wedge\eta_j$.  Elements $\eta_0, ..., \eta_{k-1}\in {}^{\omega >}\lambda\, $ are called \emph{siblings} if they are distinct elements sharing the same immediate predecessor. (i.e. there exist $\nu\in {}^{\omega >}\lambda\, $ and distinct $\, t_0, ..., t_{k-1} <\lambda\, $ such that $\, \nu^{\frown}\la t_i \ra =\eta_i\, $ for each $i<k$.) Elements $\eta_0, ..., \eta_{k-1}\in {}^{\omega >}\lambda\, $ are called \emph{distant siblings} if there exist $\, \nu\in {}^{\omega >}\lambda\, $ and distinct $\, t_0, ..., t_{k-1}<\lambda\, $ such that $\, \nu^{\frown}\la t_i \ra \lteq \eta_i\, $ for each $i<k$.   When distant siblings occur on the same level, we shall call them \emph{same-level distant siblings}.

\subsection{$k$-TP, $k$-TP$_1$, weak $k$-TP$_1$ and $k$-TP$_2$}\label{TPs}

Here we recall some definitions. A theory is said to have $k$-TP if there exist a formula $\varphi (x, y)$ and a set $\{ a_\eta \mid \eta\in {}^{\og>}\og \}$ of parameters such that $\{ \varphi (x, a_{\mu \uphp n})\mid n<\og \}$ is consistent for every $\mu\in {}^\og\og$, while for any siblings $\eta_0, \cdd, \eta_{k-1}\in {}^{\og>}\og$, $\{ \varphi (x, a_{\eta_j})\mid j <k \}$ is inconsistent. The definitions of  $k$-TP$_1$ and \emph{weak} $k$-TP$_1$ are obtained by replacing the word `siblings' by `pairwise  $\unlhd$-incomparable elements'  and `distant siblings', respectively. A theory is said to have $k$-TP$_2$ if there exist a formula $\varphi (x, y)$ and a set $\{ a^i_j \mid i, j<\og \}$ of parameters such that $\{ \varphi (x, a^{i}_{f(i)})  \mid i<\og \}$ is consistent for every function $f: \og \imp \og$, while $\{ \varphi (x, a^i_j) \mid j<\og \}$ is $k$-inconsistent for every $i<\og$. TP means $k$-TP for some $k\geq 2$. By  TP$_1$ and TP$_2$, we shall mean $2$-TP$_1$ and $2$-TP$_2$, respectively.

\section{s-indiscernibles and $\n$-indiscernibles}\label{34}

Here we make a distinction between two kinds of indiscernibility for trees presented in \cite{sh90}.
We want two notions of similarity:

\begin{dfn} For $\bar\eta, \bar\nu$ finite tuples from $\leftexp{\beta >}{\lambda}$, we make the following abbreviations
\begin{enumerate}
\item $\bar\eta \sim_{\s} \bar\nu$ (\emph{$\bar\eta$ is $\s$-similar to $\bar\nu$}) iff $\qt^{L_s}(\bar \eta; \B) = \qt^{L_s}(\bar \nu; \B)$.
\item $\bar\eta \sim_{\n} \bar\nu$ (\emph{$\bar\eta$ is $\n$-similar to $\bar\nu$}) iff  $\qt^{L_\n}(\bar \eta; \B) = \qt^{L_\n}(\bar \nu; \B)$.
\end{enumerate}
\end{dfn}

\begin{rmk}\label{251} In the case that $\ov{\eta}, \ov{\nu}$ are meet-closed tuples, $\s'$-similarity is merely $L_{\s'}$-isomorphism. Also note that $\etb \sim_{\s'} \nub$ iff $\etb' \sim_{\s'} \nub'$, where $\etb'$ and $\nub'$ denote the meet-closures of $\etb$ and $\nub$, respectively, both enumerated in the same way according to the enumerations of $\etb$ and $\nub$.
\end{rmk}

\begin{rmk} $\etb \sim_{\s} \nub$\,  implies\, $\etb \sim_{\n} \nub$.
\end{rmk}

The first part of the following definition may be found in \cite{sh90}.

\begin{dfn}\label{iindex} Fix a structure $\I$ (the \emph{index} structure.)  Given a set $\{b_i : i \in \I\}$ of same-length tuples we say it is \emph{$\I$-indexed indiscernible} if for all finite tuples $\ov{\imath}, \ov{\jmath}$ from $\I$
$$\qt(\ov{\imath};\I) = \qt(\ov{\jmath};\I)\ \Rightarrow\ \ov{b}_{\ov{\imath}} \equiv \ov{b}_{\ov{\jmath}}$$
In particular, for a set $\{b_\eta \mid \eta \in \leftexp{\beta>}{\lambda}\}$, we say it is
\begin{enumerate}
\item \emph{\s-indiscernible} (Shelah-indiscernible) if it is $\I$-indexed indiscernible for $\I$ the $L_{\s}$-structure on $\leftexp{\beta>}{\lambda}$.
\item \emph{\n-indiscernible} (strongly Shelah-indiscernible) if it is $\I$-indexed indiscernible for $\I$ the $L_{\n}$-structure on $\leftexp{\beta>}{\lambda}$.
\item (for  finite tuples $\etb_i$ from $\leftexp{\beta>}{\lambda}$, $\Delta$ a set of $L$-formulas) \emph{$\s'$-indiscernible with respect to $\ov{\eta}_0, \ldots, \ov{\eta}_m$ and $\Delta$} if for all finite tuples $\nub$ from $\leftexp{\beta >}{\lambda}$, for all $i \leq m$,

$\nub \sim_{\s'} \etb_i\, \ \Rightarrow\  \bar{b}_{\nub} \equiv_\Delta \bar{b}_{\etb_i}$
\end{enumerate}
\end{dfn}

\begin{rmk}
Every str-indiscernible set is s-indiscernible since $\sim_{\s}$ implies $\sim_{\n}$.
\end{rmk}

\begin{ntn} From now on, a set of parameters $\{a_\eta \mid \eta \in \leftexp{\beta>}{\lambda}\}$ will always consist of same-length tuples, $a_\eta$.  
\end{ntn}

With this definition we follow notation for order-indiscernibles from \citep[{Def 15.2}]{tezi11}.

\begin{dfn}\label{70} 
\begin{enumerate}
\item Let $\I$ be an arbitrary index structure. The \emph{EM-type} of a set of parameters $A = \{a_i : i \in \I\}$ is the partial type in variables $\{x_i \mid i \in \I\}$, consisting of all the formulas in the form  $\varphi(\ov{x}_{\ov{\imath}})$ (where $\ov{\imath}$ is a tuple in $\I$) satisfying the following property:
\[ \ya \varphi(\ov{a}_{\ov{\jmath}})\ \mbox{holds whenever $\ov{\jmath}$ is a tuple in $\I$ with  $\qt(\ov{\jmath}; \I)=\qt(\ov{\imath};\I)$}
\]

\smallskip

We let $\textrm{EM}_{\I}(A)$ denote this partial type.

\smallskip

\item When $\I$ is the $L_{s'}$-structure $\leftexp{\beta>}{\lambda}$, we shall write  $\textrm{EM}_{s'}(A)$ (called the \emph{$\s'$-EM-type of $A$}) for $\textrm{EM}_{\I}(A)$.
\end{enumerate}
\end{dfn}  

We recall a notion from \cite{sc12}:

\begin{dfn}\label{50} 
\begin{enumerate}
\item Let $\I$ be an arbitrary index structure. A set $B = \{b_\eta \mid \eta \in \I \}$ is \emph{based on} a set $A = \{a_\nu \mid \nu \in \I\}$ if for any formula $\varphi$ from the language of $\CM$, and for any tuple $(\eta_1, \ldots, \eta_n)$ from $\I$, there exist $(\nu_1, \ldots, \nu_n)$ from $\I$ such that
\begin{enumerate}
\item $\qt(\nu_1, \ldots, \nu_n; \I) = \qt(\eta_1, \ldots, \eta_n;\I)$, and
\item $(b_{\eta_1}, \ldots, b_{\eta_n}) \equiv_\varphi (a_{\nu_1}, \ldots, a_{\nu_n})$.
\end{enumerate}

\ms

\item  When $\I$ is the $L_{\s'}$-structure $\leftexp{\omega >}{\lambda}$, we shall say \emph{$B$ is  $\s'$-based on $A$} whenever $B$ is based on $A$.
\end{enumerate}
\end{dfn}

\begin{rmk}\label{emtp} Note that, for $B = \{b_\eta \mid \eta \in \leftexp{\omega >}{\lambda} \}$ and $A = \{a_\eta \mid \eta \in \leftexp{\omega >}{\lambda} \}$, $B$ is $\s'$-based on $A$ just in case $B \vDash \textrm{EM}_{\s'}(A)$.
\end{rmk}

\begin{rmk} We make no assumptions about the language of $\CM$.  Note that for a small subset of parameters $C \subset \CM$ we may always talk about $\I$-indexed indiscernibles \emph{over $C$}, that are based on $A$ \emph{over $C$} by simply adding constants for the elements of $C$ into the language of $\CM$.
\end{rmk}

\nin For certain tree-indexed parameters in \cite{zi88}, $B$ is defined to be \emph{lokal wie} $A$  just in case $B$ is $\s$-based on $A$.  Note that for a finite set of formulas $\Delta$, any $\Delta$-type can be conjoined into a single formula $\varphi$, so in applications $\varphi$ may be replaced by a finite set, $\Delta$.  
If a property follows from the fact that one set is based on another, we shall say  that it follows by \emph{basedness}.

\section{$\s'$-modeling theorems}\label{36}

First we verify that the usual Ramsey theorem argument works to find a sequence of infinite order-indiscernible tuples based on an initial set.

\begin{prop}\label{inf_base} Fix a possibly infinite ordinal $\gm$ and let $(\ov{a}_i)_{i<\omega}$ be a sequence where every tuple $\ov{a}_i$ has length $\gm$.  Then we may find order-indiscernible $(\ov{b}_i)_{i<\og}$ $<$-based on  $(\ov{a}_i)_{i<\og}$.
\end{prop}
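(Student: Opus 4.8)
The plan is to run the standard Ramsey-plus-compactness argument, being careful about the fact that the tuples $\ov a_i$ may have infinite length $\gm$. First I would set up the target type: let $\Gamma$ be the partial type in variables $(\ov x_i)_{i<\omega}$ saying that $(\ov x_i)_{i<\omega}$ is order-indiscernible (one axiom $\psi(\ov x_{i_1},\dots,\ov x_{i_n}) \leftrightarrow \psi(\ov x_{j_1},\dots,\ov x_{j_n})$ for each $L$-formula $\psi$ and each pair of increasing tuples) together with the $<$-EM-type axioms $\mathrm{EM}_<((\ov a_i)_{i<\omega})$ asserting that every formula realized ``cofinally often'' by increasing tuples from $(\ov a_i)_i$ holds of the corresponding $\ov x$-tuple. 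A realization of $\Gamma$ in $\CM$ is exactly an order-indiscernible sequence $<$-based on $(\ov a_i)_i$, so by saturation of $\CM$ it suffices to show $\Gamma$ is finitely satisfiable.

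Next I would reduce a finite subset of $\Gamma$ to finitely much data: only finitely many $L$-formulas $\psi_1,\dots,\psi_m$ are mentioned, and each mentions only finitely many of the variables $\ov x_i$; collecting all the coordinates of the (finitely many) tuples $\ov a_i$ that actually appear, we are really only looking at a \emph{finite} sub-tuple of each $\ov a_i$ — say the restriction to a fixed finite set $w\subseteq\gm$ of coordinates. So WLOG we may pretend $\gm$ is finite for the purposes of this finite subset. Now apply the classical Ramsey argument: color each $n$-element subset $\{i_1<\dots<i_n\}$ of $\omega$ by the $\{\psi_1,\dots,\psi_m\}$-type of $(\ov a_{i_1}\!\restriction\! w,\dots,\ov a_{i_n}\!\restriction\! w)$ (finitely many colors since finitely many formulas and a finite tuple length); by Ramsey's theorem there is an infinite homogeneous $H\subseteq\omega$. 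Re-indexing $H$ in order type $\omega$, the sequence $(\ov a_i)_{i\in H}$ is $\{\psi_1,\dots,\psi_m\}$-order-indiscernible and — because a homogeneous color is in particular the color that occurs cofinally — realizes the relevant finite fragment of $\mathrm{EM}_<$. Finitely many of the $\ov a_i$ (those indexed by the first few elements of $H$) thus witness the chosen finite subset of $\Gamma$, which establishes finite satisfiability.

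Finally, by compactness $\Gamma$ is consistent, and by saturation it is realized by some $(\ov b_i)_{i<\omega}$ in $\CM$; each $\ov b_i$ has length $\gm$, the sequence is order-indiscernible, and it is $<$-based on $(\ov a_i)_{i<\omega}$ by construction of $\Gamma$, as desired. The only point requiring any care — and the ``main obstacle'', though it is a mild one — is the infinite tuple length: one must make sure the Ramsey coloring at each finite stage uses finitely many colors, which is exactly why the reduction to a finite coordinate set $w\subseteq\gm$ is needed before invoking Ramsey's theorem. Everything else is the textbook construction of order-indiscernibles, recorded here because the later $\s'$-modeling theorems will imitate its structure (with $\I$-indexed colorings replacing the linear-order Ramsey step).
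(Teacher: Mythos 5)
Your proof runs the same Ramsey-plus-compactness argument as the paper: assemble the target type (EM-type plus indiscernibility axioms), reduce a finite fragment to finitely many formulas and a finite coordinate set, apply Ramsey to get a homogeneous subsequence, and conclude by compactness. Your extra care in restricting to a finite $w \subseteq \gm$ of coordinates is exactly the point the paper leaves implicit in ``the usual Ramsey theorem implies.'' One small slip in your setup: the EM-type $\mathrm{EM}_<\bigl((\ov a_i)_{i<\omega}\bigr)$ consists of the formulas $\varphi(\ov x_{i_1},\dots,\ov x_{i_n})$ that hold of \emph{every} increasing tuple $(\ov a_{j_1},\dots,\ov a_{j_n})$, not of those holding ``cofinally often''; the latter characterization would put both $\varphi$ and $\neg\varphi$ into the type whenever each occurs infinitely often, rendering it inconsistent. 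Your actual argument never relies on the mischaracterization — the homogeneous subsequence $(\ov a_i)_{i\in H}$ trivially realizes the finite fragment of the (correctly defined) EM-type because it is literally a subsequence — so the proof is fine once the definition is stated properly.
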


\begin{proof}
Let $\Theta (\ov{x}_i\mid i<\og)$ be the EM-type of $(\ov{a}_i)_{i<\omega}$, and let $\Gamma (\ov{x}_i\mid i<\og )$ be a partial type describing order-indiscernibility of the $\ov{x}_i$. Then the usual Ramsey theorem implies that every finite subset of $\Theta \cup \Gamma$ is realizable by some infinite subsequence of $(\ov{a}_i \mid i<\og )$. Hence by compactness $\Theta \cup \Gamma$ is realized by some $(\ov{b}_i\mid i<\og )$ which is the desired sequence.
\end{proof}

\begin{rmk}
By compactness, it is easy to grow a set of $\W$-indexed $\s$-indiscernibles to an $\s$-indiscernible set indexed by $\leftexp{\omega>}{\lambda}$.  In the case of $\n$-indiscernibles, the index set can be grown to $\leftexp{\gm >}{\lambda}$ for any ordinal $\gm$.  In the next two theorems, we choose to focus on $\W$ as the index set.
\end{rmk}

For the $\s$-modeling theorem, we repeat the argument in \cite{zi88}.

\begin{thm}[$\s$-modeling theorem]\label{s-model}  Let $A = \{a_\eta \mid \eta \in \W \}$ be an $\W$-indexed set of parameters from $\CM$.  There exists $\s$-indiscernible $C = \{c_\eta \mid \eta \in \W\}$ $\s$-based on $A$.
\end{thm}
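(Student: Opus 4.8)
The plan is to mimic the classical Ramsey-theoretic construction of order-indiscernibles (as in Proposition \ref{inf_base}), but iterated level by level along the tree, since $\s$-similarity of meet-closed tuples reduces to $L_\s$-isomorphism (Remark \ref{251}). Concretely, I would set up a type to be realized: let $\Gamma(x_\eta \mid \eta \in \W)$ be the partial type expressing $\s$-indiscernibility, i.e. for each formula $\varphi$ and each pair of tuples $\etb, \nub$ with $\etb \sim_\s \nub$, include the sentence $\varphi(x_{\etb}) \leftrightarrow \varphi(x_{\nub})$; and let $\Theta(x_\eta \mid \eta \in \W) = \mathrm{EM}_\s(A)$ be the $\s$-EM-type of $A$. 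By Remark \ref{emtp}, finding $C$ $\s$-based on $A$ and $\s$-indiscernible is exactly finding a realization of $\Theta \cup \Gamma$. By compactness it suffices to realize every finite subset in $\CM$; such a finite subset mentions only finitely many formulas (collect them into a single $\Delta$, closed under the needed Booleans) and finitely many tree elements, all lying in $P_{\leq n}(\W)$ for some $n$, and referring only to the finitely many predicates $P_\alpha$ with $\alpha \leq n$.

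So the crux is a finitary statement: for each $n$ and each finite $\Delta$, there is a finite (or just: there is a way to choose parameters) $\{c_\eta \mid \eta \in P_{\leq n}(\W)\}$ that is $\Delta$-$\s$-indiscernible and $\Delta$-$\s$-based on $A$ restricted to $P_{\leq n}(\W)$. I would prove this by induction on $n$, or more efficiently, by a single application of a ``tree Ramsey'' / Milliken-type coloring argument, but the cleanest route — following \cite{zi88} as the excerpt promises — is the iterated Ramsey argument: think of the tree $\W$ as built by repeatedly adjoining sequences of immediate successors. Work inside an ambient copy of $\W$ indexed by some large linearly ordered set of successor-coordinates. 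Using ordinary Ramsey's theorem one thins out the $\lambda$-indexed family of ``first-coordinate subtrees'' to make the induced $\Delta$-types depend only on the order-type of the chosen indices; then within each, recurse on the next level. After finitely many ($n+1$ many) rounds of Ramsey thinning, the resulting configuration has the property that the $\Delta$-type of any meet-closed tuple in $P_{\leq n}$ depends only on its $L_\s$-isomorphism type, hence (by Remark \ref{251}) only on its $\sim_\s$-class. Pulling this back through compactness to the full tree $\W$ gives $C$.

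The main obstacle — and the reason the paper bothers to write this out — is bookkeeping the interaction between the lexicographic order $\lx$, the meet function $\wedge$, and the level predicates $(P_\alpha)_{\alpha<\omega}$ simultaneously: ordinary Ramsey only homogenizes colorings of increasing tuples from a linear order, so one must be careful that after thinning, not only the $\unlhd$- and $\wedge$-structure but also the $\lx$-order and the exact levels $P_\alpha$ of all relevant nodes are determined by the $\sim_\s$-class. Keeping the levels pinned down is why $\s$-indiscernibility uses the named predicates $P_\alpha$ rather than merely $\len$; in the induction this means one never ``moves'' a node off its level, so the level data is preserved for free, but one must check that the tuples one feeds to Ramsey at each stage are set up so their meets and lex-comparisons are already fixed before coloring. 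Once that combinatorial core is in place, the compactness wrap-up and the reduction to finite $\Delta$ and finite $P_{\leq n}$ are routine. (An alternative, noted in the excerpt, derives the theorem from \citep[App. 2.6, 2.7]{sh90} exposited in the Appendix, or from Theorem \ref{1}; either could replace the hands-on Ramsey iteration, but the direct argument is the one advertised here.)
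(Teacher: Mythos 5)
Your overall strategy matches the paper's: reduce by compactness to trees $\leftexp{h>}{\omega}$ of finite height, then homogenize by iterating Ramsey over the levels, with Remark \ref{251} allowing one to think of $\sim_{\s}$-classes of meet-closed tuples as $L_\s$-isomorphism types. What your sketch leaves unresolved, and where the paper spends all its effort, is the \emph{order} of the homogenizations and the need for each inner one to be carried out ``over the rest.''

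You Ramsey-thin across the level-one subtrees $K_m$ first (``make the induced $\Delta$-types depend only on the order-type of the chosen indices'') and ``then within each, recurse.'' But thinning inside a single $K_m$ afterwards changes the joint $\Delta$-type of tuples straddling several $K_m$'s, so the cross-subtree coherence you just achieved is not automatically preserved, and your outline says nothing about how to keep it. The paper goes inside-out. In the induction step, each $B_m$ is constructed (via the induction hypothesis) to be $K_m$-indiscernible \emph{over} $\{a_\emptyset\} \cup \bigcup_{i<m} B_i \cup \bigcup_{k>m} A_k$ --- over all the other subtrees, already-thinned or not --- which is condition $(\ast)_m$ and yields Claim \ref{c1} (each $B_m$ is $\s$-indiscernible over the union of all the others). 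Only then is Ramsey (Proposition \ref{inf_base}) applied to the sequence $(\ov{B_m})_m$ of whole subtrees, and Claims \ref{c3}--\ref{c5} show that basedness transports Claim \ref{c1} through this outer application, which is exactly what makes the inner and outer homogenizations compatible. The $\lx$/$\wedge$/$(P_\alpha)$ bookkeeping you flag as the main obstacle is real but routine (it is Observation \ref{helpful}); the essential ingredient is the nested ``over'' conditions, which your outside-in outline omits, and your sketch would need to be reorganized around them.
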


\begin{rmk} The same proof works for the case where the length of the tuple $a_\eta$ is a function of $\ell(\eta)$.
\end{rmk}

\begin{proof} By compactness, it suffices to show : given any $h<\og$ and any $A = \{ a_\eta\mid \eta\in K \}$ where $K$ is  the $L_s$-structure on ${}^{h>}\og$, there exists some $s$-indiscernible  $C = \{ c_\eta\mid \eta\in K \}$ $s$-based on $A$. We prove this by induction on $h$.

\ms

The case $h=0$ is trivial. For the induction step, suppose we are given $A= \{ a_{\eta}\mid \eta\in K\}$ where $K$ is the $L_s$-structure on ${}^{h+1>}\og$.

\ms

First, some notation. For every $m<\og$, let $K_m$ be the $L_s$-substructure of $K$ on $\{ \eta\in K \mid \la m \ra \lteq \eta \}$. Observe $K = \{\emptyset\} \cup \bigcup_{m<\omega} K_m$.  Let $A_m := \{a_\eta \mid \eta \in K_m\}$.

\ms

Using the induction hypothesis on $h$, we recursively define sets $B_0, B_1, \ldots$ where, for each $m<\og$

\vspace{.1in}

($\ast$)$_m$ ~ $B_m$ is $K_m$-indexed indiscernible over the set $\{a_\emptyset\} ~\cup~ \bigcup_{i<m} B_i \cup \; \bigcup_{k > m} A_k$, and is $\s$-based on $A_m$ over this same set.

\vspace{.1in}

Now we let $b_{\emptyset} : = a_{\emptyset}$ and  $B: = \{b_\emptyset\} \cup \; \bigcup_{i<\omega} B_i$.

\begin{clm}\label{c1}
Every $B_m$ is $s$-indiscernible over $\{ b_{\emptyset} \} \cup \bigcup_{i\neq m} B_i$.
\end{clm}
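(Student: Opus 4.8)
The plan is to prove this by a direct compactness/finite-character argument, exploiting the specific construction. We need to show that each $B_m$ is $\s$-indiscernible over $\{b_\emptyset\} \cup \bigcup_{i \neq m} B_i$. Fix $m$. The key structural observation is that $K$, as an $L_\s$-structure on ${}^{h+1>}\omega$, decomposes so that any two tuples $\etb, \nub$ drawn entirely from $K_m$ with $\etb \sim_\s \nub$ (as tuples in $K$) remain $\s$-similar, and moreover — crucially — the positions of elements of $\bigcup_{i \neq m} B_i$ relative to $K_m$ are "rigid" in the sense that a tuple from some $K_i$ with $i \neq m$ has no $L_\s$-relations with a tuple from $K_m$ beyond those forced by the fact that their meets lie at or below $\emptyset$, since the elements $\la i \ra$ and $\la m \ra$ are distinct siblings of $\emptyset$.

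\textbf{Main steps.} First I would reduce, by basedness and compactness, to showing the following: for any formula $\varphi$, any finite tuple $\db$ enumerating elements of $\{b_\emptyset\} \cup \bigcup_{i \neq m} B_i$ (say $\db = \bar b_{\ov{\zeta}}$ for $\ov\zeta$ a tuple from $K \setminus K_m$ together with $\emptyset$), and any two tuples $\etb, \nub$ from $K_m$ with $\etb \sim_\s \nub$, we have $\bar b_{\etb} \db \equiv_\varphi \bar b_{\nub} \db$. Second, the heart of the matter: the concatenated tuples $\etb{}^\frown\ov\zeta$ and $\nub{}^\frown\ov\zeta$ are $\s$-similar in $K$ (this uses Remark \ref{251} on meet-closures — the meet of anything in $K_m$ with anything in $\ov\zeta$ is $\emptyset$, and $\lx$, $\len$, and the $P_\alpha$'s of the combined tuples are determined by the pieces separately). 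Third, invoke property ($\ast$)$_m$: $B_m$ is $K_m$-indexed indiscernible over the set $\{a_\emptyset\} \cup \bigcup_{i<m} B_i \cup \bigcup_{k>m} A_k$; but the $\db$ we care about consists of elements of $\bigcup_{i<m} B_i$ (already in the base of ($\ast$)$_m$) together with elements of $\bigcup_{k>m} B_k$, which are \emph{not} literally in that base — they sit inside $\bigcup_{k>m} A_k$ only after the later stages of the recursion have modified them. So I would instead run the argument stage-by-stage: use ($\ast$)$_m$ to handle $\emptyset$ and the $B_i$ for $i < m$, then push the indiscernibility through the recursive construction of $B_{m+1}, B_{m+2}, \ldots$ one at a time, observing that each ($\ast$)$_k$ for $k > m$ was built to preserve indiscernibility (and basedness) over a set that includes $\bigcup_{i<k} B_i \supseteq B_m$, hence over $B_m$ in particular, so applying $B_k$'s indiscernibility does not disturb the $\s$-indiscernibility of $B_m$ already established at earlier stages.

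\textbf{The main obstacle.} The delicate point is the bookkeeping in the previous paragraph: ($\ast$)$_m$ gives indiscernibility of $B_m$ over the $A_k$'s for $k > m$, not over the final $B_k$'s, so one cannot simply quote ($\ast$)$_m$ with the desired base set. The fix is to prove the claim by downward induction on the index, or equivalently to observe that the recursion was designed so that each step "$A_k \rightsquigarrow B_k$" is itself based and indiscernible over a set containing $B_m$; then a finite tuple $\db$ from $\bigcup_{i\neq m} B_i$ only involves finitely many indices, so only finitely many construction steps are relevant, and one composes the relevant instances of ($\ast$)$_i$. Getting this composition right — in particular checking that "indiscernible over $X$" is preserved when one replaces part of $X$ by something based-and-indiscernible over the rest — is where the real care is needed; everything else is the routine translation between basedness, EM-types (Remark \ref{emtp}), and compactness.
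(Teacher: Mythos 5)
Your overall strategy for the claim matches the paper's: run a downward replacement, using the basedness half of $(\ast)_k$ for $k>m$ to swap each $\ov{d}_k$ drawn from $B_k$ for a tuple $\ov{e}_k$ from $A_k$ (highest index first, and over a base that already contains $B_m$ and all lower-indexed $B_i$'s), after which every non-$B_m$ parameter lies inside $\{a_\emptyset\}\cup\bigcup_{i<m}B_i\cup\bigcup_{k>m}A_k$, so the indiscernibility half of $(\ast)_m$ produces the contradiction. Your ``main obstacle'' paragraph correctly pinpoints this as the composition to carry out, and the abstract step you name (``indiscernible over $X$'' survives replacing a piece of $X$ by something based over the rest together with $B_m$) is exactly what the paper iterates finitely many times.

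One caveat: your ``Second'' step — that $\etb{}^\frown\ov\zeta$ and $\nub{}^\frown\ov\zeta$ are $\sim_\s$-equivalent in $K$, which you call the heart of the matter — is actually immaterial to Claim~\ref{c1} and could not, on its own, deliver the conclusion. At this point in the construction $B$ has not been shown to be $\s$-indiscernible as a $K$-indexed family, so $\s$-similarity of concatenated tuples spanning different $K_i$'s cannot be cashed out into equivalence of types. What $(\ast)_m$ provides is $K_m$-indexed indiscernibility, so only the qf-types of $\ov{l}_1,\ov{l}_2$ inside $K_m$ are consulted, with the tuples $\ov{d}_{m+1},\ldots,\ov{d}_n$ treated purely as external parameters. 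The cross-component similarity you observe is precisely the paper's Observation~\ref{helpful}, and it is the right tool for Claim~\ref{c2} (showing $B$ is $\s$-based on $A$), not for Claim~\ref{c1}.
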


\textit{Proof of Claim \ref{c1}}. Suppose not. Then we have
\[ \vDash \varphi(b_{\ov{l}_1}, \ov{d}_{m+1}, \ldots, \ov{d}_n) \leftrightarrow \neg \varphi(b_{\ov{l}_2}, \ov{d}_{m+1}, \ldots, \ov{d}_n)
\]
where $\varphi(\ov{x}_m, \ov{x}_{m+1}, \cdd, \ov{x}_n)$ is a formula (for some $n > m$) with parameters from $\{ b_{\emptyset} \}\cup\, \bigcup_{i<m} B_i$, and $\ov{l}_1, \ov{l}_2$ are tuples from $K_m$ with the same quantifier-free type, and $\ov{d}_{m+1}, \ldots, \ov{d}_n$ are tuples where each $\ov{d}_i$ is from $B_i$.

\ms

Then by the basedness assumption in $(\ast)_n$, there exists $\ov{e}_n$ from $A_n$ such that
$$\vDash \varphi(b_{\ov{l}_1}, \ov{d}_{m+1}, \ldots, \ov{e}_n) \leftrightarrow \neg \varphi(b_{\ov{l}_2}, \ov{d}_{m+1}, \ldots, \ov{e}_n)$$

Continue applying the basedness assumption in $(\ast)_i$, for $n \geq i > m$, to obtain $\ov{e}_n, \ldots, \ov{e}_{m+1}$, where $\ov{e}_i$ are from $A_i$ and
$$\vDash \varphi(b_{\ov{l}_1}, \ov{e}_{m+1}, \ldots, \ov{e}_n) \leftrightarrow \neg \varphi(b_{\ov{l}_2}, \ov{e}_{m+1}, \ldots, \ov{e}_n)$$
But this contradicts the indiscernibility assumption in $(\ast)_m$. This completes the proof of Claim \ref{c1}.

\begin{clm}\label{c2}
$B$ is $s$-based on $A$.
\end{clm}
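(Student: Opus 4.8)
The plan is to unwind the definition of $\s$-based. So fix a formula $\varphi$ in the language of $\CM$ and a tuple $\ov{\eta} = (\eta_1, \ldots, \eta_n)$ from $K$; I must produce $\ov{\nu}$ from $K$ with $\qt^{L_\s}(\ov{\nu};K) = \qt^{L_\s}(\ov{\eta};K)$ and $\ov{b}_{\ov{\eta}} \equiv_\varphi \ov{a}_{\ov{\nu}}$. First I would split the tuple $\ov{\eta}$ according to which of the subtrees $K_m$ each $\eta_i$ lies in (with the root $\emptyset$ handled separately, since $b_\emptyset = a_\emptyset$). Reordering for readability, write $\ov{\eta}$ as (a possible occurrence of $\emptyset$ followed by) blocks $\ov{\eta}^{(m_1)}, \ldots, \ov{\eta}^{(m_r)}$ where $\ov{\eta}^{(m_j)}$ is the subtuple lying in $K_{m_j}$ and $m_1 < \cdots < m_r$. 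Correspondingly $\ov{b}_{\ov{\eta}}$ breaks into $b_\emptyset$ together with $\ov{b}_{\ov{\eta}^{(m_1)}}, \ldots, \ov{b}_{\ov{\eta}^{(m_r)}}$, where $\ov{b}_{\ov{\eta}^{(m_j)}}$ is a tuple from $B_{m_j}$.

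Next I would apply the basedness half of $(\ast)_{m_1}$: since $B_{m_1}$ is $\s$-based on $A_{m_1}$ over the set containing $\{a_\emptyset\}$ and all the $B_k$ for $k > m_1$ (in particular over the parameters $b_\emptyset$ and $\ov{b}_{\ov{\eta}^{(m_2)}}, \ldots, \ov{b}_{\ov{\eta}^{(m_r)}}$), there is a tuple $\ov{\nu}^{(m_1)}$ from $K_{m_1}$ with $\qt^{L_\s}(\ov{\nu}^{(m_1)};K_{m_1}) = \qt^{L_\s}(\ov{\eta}^{(m_1)};K_{m_1})$ and such that replacing $\ov{b}_{\ov{\eta}^{(m_1)}}$ by $\ov{a}_{\ov{\nu}^{(m_1)}}$ preserves the $\varphi$-type of the whole displayed tuple. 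Then I iterate: having replaced the first $j$ blocks by $A$-tuples, apply $(\ast)_{m_{j+1}}$ — note $B_{m_{j+1}}$ is $\s$-based over a set containing all $A_k$ with $k > m_{j+1}$ (which absorbs the later still-$B$ blocks) and containing $\bigcup_{i < m_{j+1}} B_i$, hence in particular all the $A$-tuples already produced from the earlier blocks $K_{m_1}, \ldots, K_{m_j}$ (those sit inside $\bigcup_{i < m_{j+1}} B_i$? no — they sit inside $\bigcup_{k \le m_j} A_k \subseteq \bigcup_{k < m_{j+1}} A_k$, which is part of the "same set" since $(\ast)$ includes $\bigcup_{k>m} A_k$ only for $k>m$; I should instead note these earlier $A$-blocks have indices $\le m_j < m_{j+1}$ and so lie outside both $\bigcup_{i<m_{j+1}}B_i$ and $\bigcup_{k>m_{j+1}}A_k$). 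This is the delicate bookkeeping point: I must confirm that at each stage the parameters I am carrying along — $b_\emptyset$, the already-converted $A$-blocks, and the not-yet-converted $B$-blocks — all lie in the base set over which $B_{m_{j+1}}$ is $\s$-based. The $B$-blocks with larger index lie in $\bigcup_{k > m_{j+1}} A_k$ after conversion, so I should order the conversions from \emph{largest} index down, or alternatively convert left to right but observe that an unconverted block $\ov{b}_{\ov{\eta}^{(m_{j'})}}$ with $j' > j+1$ still lies in $B_{m_{j'}} \subseteq \bigcup_{k > m_{j+1}} A_k$ is false since it's a $B$-set not an $A$-set; so the clean route is to go from the largest block downward, using that $B_{m}$ is indiscernible (and based) over $\bigcup_{i<m} B_i$ which contains all smaller-index blocks whether converted or not, and over $\bigcup_{k>m} A_k$ which contains all larger-index blocks \emph{after} they have been converted.

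Concretely: process the blocks in order $m_r, m_{r-1}, \ldots, m_1$. At the step for $m_j$, the larger blocks $K_{m_{j+1}}, \ldots, K_{m_r}$ have already been converted to $A$-tuples, which lie in $\bigcup_{k > m_j} A_k$; the root parameter $a_\emptyset = b_\emptyset$ is in the base set of $(\ast)_{m_j}$; the smaller blocks are still $B$-tuples lying in $\bigcup_{i < m_j} B_i$, also in the base set. So the basedness clause of $(\ast)_{m_j}$ applies and converts block $m_j$ while preserving the $\varphi$-type of the entire tuple. After all $r$ steps, $\ov{b}_{\ov{\eta}}$ has been transformed into $\ov{a}_{\ov{\nu}}$ where $\ov{\nu}$ is assembled from $\emptyset$ (if present) and the blocks $\ov{\nu}^{(m_1)}, \ldots, \ov{\nu}^{(m_r)}$, and $\ov{b}_{\ov{\eta}} \equiv_\varphi \ov{a}_{\ov{\nu}}$. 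Finally I must check $\qt^{L_\s}(\ov{\nu};K) = \qt^{L_\s}(\ov{\eta};K)$: since each $\ov{\nu}^{(m_j)}$ was chosen with the same $L_\s$-quantifier-free type in $K_{m_j}$ as $\ov{\eta}^{(m_j)}$, and since elements of distinct $K_m$'s relate to one another in $K$ only through the root $\emptyset$ (their meet is $\emptyset$, they are $\unlhd$-incomparable, their lexicographic order and level predicates are determined by which $K_m$ they sit in, and $\langle m_j\rangle \unlhd \nu^{(m_j)}$ so the "first coordinate" is fixed), the quantifier-free $L_\s$-type of the whole tuple is determined by the types of the blocks together with the fixed cross-block data, hence is preserved. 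I expect this last compatibility verification — that $\qt^{L_\s}$ in $K$ is determined block-by-block — to be the only real content beyond the mechanical iteration, and it is routine from the structure of the Shelah language on ${}^{h+1>}\og$.
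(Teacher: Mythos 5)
Your proposal is correct and follows essentially the same route as the paper: block the tuple by the subtrees $K_m$, handle the root separately (as $b_\emptyset = a_\emptyset$), apply the basedness half of $(\ast)_m$ from the largest index down to the smallest to swap each $B$-block for an $A$-block, and then invoke the observation that the quantifier-free $L_\s$-type of a cross-block tuple in $K$ is determined by the block-wise types together with the fixed cross-block relations (pairwise $\unlhd$-incomparability, meet $\emptyset$, $\lx$-order by subtree index) — which is exactly the paper's Observation 4.6.
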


\textit{Proof of Claim \ref{c2}}. We will use the following.

\begin{obs}\label{helpful} If $\ov{\eta}_i, \ov{\nu}_i$ are tuples from $K_i$ for $i=1, \cdd, n$ and $\ov{\eta}_i\sim_{\s}\ov{\nu}_i$, then we have $\qt^{L_\s}(\ov{\eta}_1, \ldots,  \ov{\eta}_n / \emptyset) = \qt^{L_\s}(\ov{\nu}_1, \ldots, \ov{\nu}_n / \emptyset)$.
\end{obs}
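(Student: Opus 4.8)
The plan is to reduce the statement about the full tuples to a statement purely about the relative positions of elements in different $K_i$'s, using the fact that $L_\s$-similarity within each $K_i$ already handles the ``internal'' relations. First I would pass to meet-closures: by Remark \ref{251}, $\ov\eta_i \sim_\s \ov\nu_i$ implies the meet-closures are $L_\s$-isomorphic, and $\qt^{L_\s}(\ov\eta_1,\dots,\ov\eta_n/\emptyset)$ is determined by the quantifier-free $L_\s$-type of the meet-closure of the concatenation $\ov\eta_1{}^\frown\cdots{}^\frown\ov\eta_n$. So it suffices to show that the meet-closure of $\ov\eta_1{}^\frown\cdots{}^\frown\ov\eta_n$ is $L_\s$-isomorphic to that of $\ov\nu_1{}^\frown\cdots{}^\frown\ov\nu_n$ via the map sending the meet-closure of $\ov\eta_i$ to that of $\ov\nu_i$ (coordinatewise as given, plus the induced map on new meet points).

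The key structural point is that any element of $K_i$ (for $i\geq 1$) has the form $\la i\ra{}^\frown\eta'$ and so lies strictly above the root $\emptyset$; moreover for $i\neq j$, if $\sigma\in K_i$ and $\tau\in K_j$ then $\sigma\wedge\tau=\emptyset$ (both start with distinct first entries $i$ and $j$), $\sigma$ and $\tau$ are $\unlhd$-incomparable, and $\sigma\lx\tau \iff i<j$. Thus every ``cross'' relation between an element of $K_i$ and an element of $K_j$ is completely determined by the pair $(i,j)$ alone, independently of which elements we pick. New meet points arising in the closure of the concatenation are either (a) meets within a single $K_i$ — already accounted for by $\ov\eta_i\sim_\s\ov\nu_i$ and hence by the $K_i$-internal isomorphism — or (b) meets of elements from distinct $K_i$, $K_j$, which always equal $\emptyset$. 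So the meet-closure of the concatenation is just the disjoint union of the meet-closures of the individual $\ov\eta_i$'s together with the root $\emptyset$, glued below all of them, and the cross-relations only depend on the indices $i$; the same description applies verbatim to the $\ov\nu_i$'s. Assembling the $K_i$-internal $L_\s$-isomorphisms (which exist by $\ov\eta_i\sim_\s\ov\nu_i$) together with the identity on $\{\emptyset\}$ therefore yields an $L_\s$-isomorphism of the two meet-closures that respects the predicates $P_\alpha$ (levels within each $K_i$ are preserved by the internal isomorphism, and $\emptyset$ goes to $\emptyset$), $\unlhd$, $\wedge$, and $\lx$ (the cross-order is governed by the fixed indices $i<j$). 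This gives $\qt^{L_\s}(\ov\eta_1,\dots,\ov\eta_n/\emptyset)=\qt^{L_\s}(\ov\nu_1,\dots,\ov\nu_n/\emptyset)$.

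The main obstacle — really the only place care is needed — is verifying that the $\lx$-relation and the $P_\alpha$-relations are genuinely preserved across blocks: one must check that no cross-pair $(\sigma,\tau)$ with $\sigma\in K_i$, $\tau\in K_j$ can be $\unlhd$-comparable or have a nontrivial meet, and that the level predicates $P_\alpha$ are handled correctly (here it matters that $\ov\eta_i\sim_\s\ov\nu_i$ preserves $P_\alpha$, which is exactly what $L_\s$-similarity encodes, as opposed to $L_\n$-similarity). Both are immediate from the fact that distinct first coordinates force incomparability and meet $\emptyset$; so I expect this to be a short, essentially bookkeeping argument once the reduction to meet-closures is in place.
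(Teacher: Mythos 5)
Your proof is correct and takes essentially the same approach as the paper's: the key observation in both is that every cross-block pair $\sigma\in K_i$, $\tau\in K_j$ (for $i\neq j$) is $\unlhd$-incomparable with meet $\emptyset$ and lexicographic order determined solely by $i<j$, while within-block relations are preserved by the hypothesis $\ov\eta_i\sim_\s\ov\nu_i$. Your reduction to meet-closures via Remark \ref{251} is a helpful formalization that the paper leaves implicit, but it is not a different argument.
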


\nin  This is due to the definition of the $K_i$'s. We need only check that binary relations in $L_\s$ over the parameter $\emptyset$ are preserved in $\nub_i, \nub_j$ when they hold of elements
across $\etb_i, \etb_j$ for $i \neq j$.  However, any $\mu_i\in K_i$ and $\mu_j\in K_j$ (with $i < j$) are $\unlhd$-incomparable, $\mu_i  \lx \mu_j$, and $\mu_i\wedge \mu_j = \emptyset$.

\ms

So it suffices to show the following :

Given any formula  $\varphi(\ov{x}_0, \cdd, \ov{x}_n)$  over $\{ b_\emptyset \}$,  and any $\ov{\eta}_i$ from $K_i$  for $i = 0, \ldots n$ such that $\vDash \varphi(b_{\ov{\eta}_0}, \ldots, b_{\ov{\eta}_n})$, there exist $\ov{\nu}_i$ from $K_i$ for $i= 0, \ldots, n$ such that $\ov{\nu}_i \sim_{\s} \ov{\eta}_i$ and $\vDash \varphi(a_{\ov{\nu}_0}, \ldots, a_{\ov{\nu}_n})$.

\ms

But we can find such $\ov{\nu}_i$'s by the same procedure as in the proof for Claim \ref{c1}, using $(\ast)_i$ repeatedly (from $i=n$ to $i=0$) to replace tuples from $B_i$ with tuples from $A_i$. This completes the proof of Claim \ref{c2}.

\bs

Now let us fix an enumeration of $\leftexp{h>}{\omega}$, say $(k_i\mid i<\gm)$ for some ordinal $\gm$. Then we can view each $B_m$ as an infinite tuple $\ov{B_m} = (d^m_i \mid i<\gm)$, where $d^m_i : = b_{\la m \ra^\frown k_i}$.  

\begin{ntn}
We will refer to the $L_\s$-isomorphism $K_m \bij K_{m'}$ given by $\la m \ra^\frown k_i \mapsto \la m' \ra^\frown k_i$ as the \emph{natural bijection}.
\end{ntn}

\begin{clm}\label{c3}
There exists a sequence  $( \ov{C_m} \mid m<\og )$ which is order-indiscernible over $\{ b_{\emptyset} \}$, and is $<$-based on $( \ov{B_m} \mid m<\og )$ over $\{ b_{\emptyset} \}$.
\end{clm}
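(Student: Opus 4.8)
The plan is to reduce Claim \ref{c3} to Proposition \ref{inf_base}. I would first observe that each $B_m$ has been re-coded as an infinite tuple $\ov{B_m} = (d^m_i \mid i < \gm)$ of elements from $\CM$, so that $(\ov{B_m} \mid m < \og)$ is literally a sequence of $\gm$-length tuples. By Proposition \ref{inf_base} (with the parameter set $\{b_\emptyset\}$ absorbed into the language as constants, as permitted by the remark following Definition \ref{50}), there is an order-indiscernible sequence $(\ov{C_m} \mid m < \og)$ that is $<$-based on $(\ov{B_m} \mid m < \og)$, both over $\{b_\emptyset\}$. That is exactly the assertion of the claim; the only content beyond citing the proposition is checking that the hypotheses are met, namely that all the $\ov{B_m}$ have a common length $\gm$, which holds because $\gm$ was fixed as the order type of the enumeration $(k_i \mid i < \gm)$ of $\leftexp{h>}{\og}$ and the natural bijections identify the index sets $K_m$ uniformly.

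The one subtlety worth spelling out is that the $\ov{B_m}$ are infinite tuples (of length $\gm$, which may be infinite), whereas most Ramsey-type arguments are stated for finite tuples; but Proposition \ref{inf_base} was proved precisely in this generality — it allows $\gm$ to be a possibly infinite ordinal — so no extra work is needed there. I would simply remark that the EM-type $\Theta$ of $(\ov{B_m} \mid m<\og)$ over $\{b_\emptyset\}$ together with the partial type $\Gamma$ asserting order-indiscernibility of the $\ov{x}_m$ is finitely satisfiable along an infinite subsequence of $(\ov{B_m})$ by the ordinary Ramsey theorem applied to $n$-element subsets of $\og$ for each relevant finite fragment, and then compactness produces $(\ov{C_m})$.

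Thus the proof is essentially a one-line invocation: \textit{apply Proposition \ref{inf_base} to the sequence $(\ov{B_m} \mid m<\og)$ of $\gm$-tuples, working over $\{b_\emptyset\}$.} I do not anticipate a genuine obstacle here; the real work of the theorem lies in Claims \ref{c1} and \ref{c2} and in the subsequent step where one must transfer the order-indiscernibility of $(\ov{C_m})$ back to $\s$-indiscernibility of the reassembled set $C = \{b_\emptyset\} \cup \bigcup_m \ov{C_m}$ (decoded via the natural bijections) and verify $C$ is still $\s$-based on $A$. Since the statement to be proved is only Claim \ref{c3}, the writeup should stay short: state that it follows from Proposition \ref{inf_base}, after noting all $\ov{B_m}$ share the length $\gm$ and that constants for $b_\emptyset$ may be added to the language.
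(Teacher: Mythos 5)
Your proposal is correct and takes essentially the same route as the paper, whose entire proof of this claim is the single sentence ``This is a straightforward application of Proposition \ref{inf_base}.'' The details you add (absorbing $b_\emptyset$ into the language as a constant and noting that all the $\ov{B_m}$ share the common length $\gm$ so the proposition applies) are exactly the routine verifications the paper leaves implicit.
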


\textit{Proof of Claim \ref{c3}}. This is a straightforward application of Proposition \ref{inf_base}.

\bs

Now consider  $C = \{ c_{\eta}\mid \eta\in K \}$ which is naturally constructed from the $C_m$'s. More precisely:  let $\ov{C_m} = (e^m_i\mid i<\gm )$ for each $m<\og$, and for each  $\eta\in K - \{\emptyset\}$, let $c_{\eta}: = e^m_i$ where $m<\og$ and $i<\gm$ are such that $\eta =\la m \ra^{\frown}k_i$. Finally we let $c_{\emptyset}: = b_{\emptyset}$.

\begin{clm}\label{c4}
$C$ is $s$-based on $B$.
\end{clm}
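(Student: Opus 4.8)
The plan is to unwind the construction of $C$ from the $\ov{C_m}$'s and reduce the claim to the basedness of $(\ov{C_m} \mid m<\og)$ on $(\ov{B_m} \mid m<\og)$ proved in Claim \ref{c3}, together with Observation \ref{helpful}. First I would fix a formula $\varphi(\ov{x}_{\ov{\eta}})$ over $\{b_\emptyset\}$ and a tuple $\ov{\eta}$ from $K$ with $\vDash \varphi(c_{\ov{\eta}})$, and show there is a tuple $\ov{\nu}$ from $K$ with $\ov{\nu} \sim_{\s} \ov{\eta}$ and $\vDash \varphi(b_{\ov{\nu}})$. Since $c_\emptyset = b_\emptyset$, the entry $\emptyset$ (if it occurs in $\ov{\eta}$) is handled trivially and may be absorbed into the parameters; so I may assume $\ov{\eta}$ is a tuple from $K - \{\emptyset\}$. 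Partition the entries of $\ov{\eta}$ according to which $K_m$ they lie in: after reindexing, $\ov{\eta}$ splits as $\ov{\eta}_{m_1} ^\frown \cdots ^\frown \ov{\eta}_{m_r}$ where $m_1 < \cdots < m_r$ and $\ov{\eta}_{m_t}$ is a tuple from $K_{m_t} - \{\emptyset\}$.

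Next I would translate into the language of the infinite tuples. Writing each $\ov{\eta}_{m_t}$ in terms of the fixed enumeration $(k_i \mid i<\gm)$ of $\leftexp{h>}{\og}$, there is a single tuple $\ov{\imath}$ of indices from $\gm$ so that $c_{\ov{\eta}_{m_t}}$ is the subtuple of $\ov{C_{m_t}}$ at positions $\ov{\imath}$ (the key point being that the index-pattern $\ov{\imath}$ is the same for every $t$, because by Remark \ref{251} and the structure of the $K_m$'s, $\ov{\eta}_{m_s}\sim_{\s}\ov{\eta}_{m_t}$ for all $s,t$ — they are all tuples of the form $\la m \ra^\frown(\text{same pattern})$). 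Thus $\vDash \varphi(c_{\ov{\eta}})$ says that a certain formula $\psi(\ov{y}_{m_1}, \ldots, \ov{y}_{m_r})$ over $\{b_\emptyset\}$ — obtained from $\varphi$ by the bookkeeping $\ov{y}_m \mapsto$ (positions $\ov{\imath}$ of $\ov{x}_m$) — holds of $(\ov{C_{m_1}}, \ldots, \ov{C_{m_r}})$. By Claim \ref{c3}, $(\ov{C_m})_m$ is $<$-based on $(\ov{B_m})_m$ over $\{b_\emptyset\}$, so there are $m'_1 < \cdots < m'_r$ in $\og$ with $\vDash \psi(\ov{B_{m'_1}}, \ldots, \ov{B_{m'_r}})$, i.e. $\vDash \varphi(b_{\ov{\nu}})$ where $\ov{\nu} := \ov{\nu}_{m'_1} ^\frown \cdots ^\frown \ov{\nu}_{m'_r}$ and $\ov{\nu}_{m'_t} := \la m'_t \ra ^\frown k_{i}$ for $i$ running through $\ov{\imath}$; equivalently $\ov{\nu}_{m'_t}$ is the image of $\ov{\eta}_{m_t}$ under the natural bijection $K_{m_t} \bij K_{m'_t}$.

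Finally I would check $\ov{\nu} \sim_{\s} \ov{\eta}$. The natural bijection is an $L_{\s}$-isomorphism, so $\ov{\nu}_{m'_t} \sim_{\s} \ov{\eta}_{m_t}$ for each $t$; and since $m_1 < \cdots < m_r$ and $m'_1 < \cdots < m'_r$, Observation \ref{helpful} gives $\qt^{L_{\s}}(\ov{\nu}_{m'_1}, \ldots, \ov{\nu}_{m'_r}/\emptyset) = \qt^{L_{\s}}(\ov{\eta}_{m_1}, \ldots, \ov{\eta}_{m_r}/\emptyset)$, which is exactly $\ov{\nu} \sim_{\s} \ov{\eta}$. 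This proves Claim \ref{c4}. The one point requiring care — and the main obstacle — is verifying that a single index pattern $\ov{\imath}$ works simultaneously for all the blocks $\ov{\eta}_{m_t}$; this is precisely where one uses that the $L_{\s}$-type of a tuple from $K$ ``remembers'' only the relative pattern within each $K_m$ plus the order among the first coordinates $m$ (Observation \ref{helpful}), so that the original tuple $\ov{\eta}$ can be rearranged into same-pattern blocks before appealing to Claim \ref{c3}.

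Combining Claims \ref{c2} and \ref{c4}: $C$ is $\s$-indiscernible by Claim \ref{c3} (order-indiscernibility of $(\ov{C_m})_m$ over $\{b_\emptyset\}$ unwinds, via Observation \ref{helpful}, to $\s$-indiscernibility of $C$), and $C$ is $\s$-based on $A$ since basedness is transitive and $C$ is $\s$-based on $B$ (Claim \ref{c4}) which is $\s$-based on $A$ (Claim \ref{c2}). This completes the induction step, and hence the proof of the $\s$-modeling theorem.
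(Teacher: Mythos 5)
Your proof follows the same route as the paper's: decompose the tuple $\ov{\eta}$ into blocks $\ov{\eta}_{m_t}$ lying in the $K_{m_t}$'s, apply the $<$-basedness of $(\ov{C_m})_m$ on $(\ov{B_m})_m$ from Claim~\ref{c3} to shift blocks to indices $m'_1 < \cdots < m'_r$, identify $\ov{\nu}_{m'_t}$ as the image of $\ov{\eta}_{m_t}$ under the natural bijection, and then use Observation~\ref{helpful} to conclude $\ov{\nu} \sim_{\s} \ov{\eta}$. (The paper simply pads so that a block from each of $K_0, \ldots, K_n$ is present; you instead allow gaps $m_1 < \cdots < m_r$. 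This is immaterial.)

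However, the assertion you single out as ``the one point requiring care --- and the main obstacle'' is false, and the claim supporting it is also false. It is not true that $\ov{\eta}_{m_s} \sim_{\s} \ov{\eta}_{m_t}$ for all $s, t$: the blocks can have different lengths and different patterns within their respective $K_{m_t}$'s (e.g.\ one block a single node, another a pair of siblings), so they need not even have the same arity, let alone be $\sim_{\s}$-similar. Consequently there is in general no single index pattern $\ov{\imath}$ working simultaneously for all blocks, and Remark~\ref{251} does not give one. The good news is that no such uniformity is needed: $<$-basedness of $(\ov{C_m})_m$ on $(\ov{B_m})_m$ applies to \emph{any} formula $\psi(\ov{y}_{m_1}, \ldots, \ov{y}_{m_r})$ in finitely many free variables drawn from the infinite tuples, and $\psi$ is free to mention a different finite set of positions $\ov{\imath}_t$ in each $\ov{y}_{m_t}$. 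Since $<$-basedness substitutes the entire infinite tuples $\ov{B_{m'_t}}$ for $\ov{C_{m_t}}$, positions are preserved, so reading off positions $\ov{\imath}_t$ of $\ov{B_{m'_t}}$ recovers $b_{\ov{\nu}_{m'_t}}$ with $\ov{\nu}_{m'_t}$ the natural-bijection image of $\ov{\eta}_{m_t}$, exactly as you want. Deleting the false ``single pattern'' claim and allowing each block its own position set leaves a correct proof that coincides with the paper's.

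One small further remark: you invoke Observation~\ref{helpful} for blocks $\ov{\eta}_{m_t} \in K_{m_t}$ and $\ov{\nu}_{m'_t} \in K_{m'_t}$ with $m_t \neq m'_t$, whereas the observation as stated takes both in the same $K_i$. This is harmless (the paper implicitly does the same), since its proof only uses that elements of distinct $K_i$'s are $\unlhd$-incomparable with meet $\emptyset$ and are lexicographically ordered according to the index, and both $(m_t)_t$ and $(m'_t)_t$ are increasing.
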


\textit{Proof of Claim \ref{c4}}. Suppose $\vDash \varphi (c_{\emptyset}, c_{\ov{k}_0}, \ldots, c_{\ov{k}_n})$ for some formula $\varphi(x, \ov{x}_0, \ldots, \ov{x}_n)$  and tuples $\ov{k}_m$ from $K_m$ for $m=0, \ldots, n$. Since $(\ov{C_m}\mid m<\og )$ is $<$-based on $(\ov{B_m}\mid m<\og )$ over $\{ b_\emptyset \} = \{ c_{\emptyset} \} $, there exist indices $s(0) < \ldots <s(n) <\og$ and $\ov{k}'_m\in K_{s(m)}$ for $m=0, \ldots n$, where each $\ov{k}'_m$ is the image of $\ov{k}_m$ under the natural bijection $K_m \bij K_{s(m)}$ and such that $\vDash \varphi (b_{\emptyset}, b_{\ov{k}'_0}, \ldots, b_{\ov{k}'_n})$.  By the bijection and Observation \ref{helpful},  $(\emptyset, \ov{k}_0, \ldots, \ov{k}_n) \sim_{\s} (\emptyset, \ov{k}'_0, \ldots, \ov{k}'_n)$. This completes the proof of Claim \ref{c4}.

\begin{clm}\label{c5}
Each $C_m$ is $\s$-indiscernible over $\{c_\emptyset\} \cup \bigcup_{i \neq m} C_i$.
\end{clm}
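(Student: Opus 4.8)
The plan is to run the same kind of argument as in Claim \ref{c1}, but now transferring a hypothetical failure of $\s$-indiscernibility of some $C_m$ back through the $<$-basedness of Claim \ref{c3} to a failure of $\s$-indiscernibility of some $B_{m'}$, which contradicts Claim \ref{c1}. So fix $m<\og$ and suppose $C_m$ is \emph{not} $\s$-indiscernible over $\{c_\emptyset\}\cup\bigcup_{i\neq m}C_i$. Then there are a formula $\varphi$ over $\{c_\emptyset\}$, finitely many pairwise distinct indices $m_1,\dots,m_r$, each $\neq m$, tuples $\ov{d}_l$ from $C_{m_l}$ (say $\ov{d}_l=c_{\ov{n}_l}$ with $\ov{n}_l$ a tuple from $K_{m_l}$), and tuples $\ov{l}_1\sim_{\s}\ov{l}_2$ from $K_m$, with
$$\vDash \varphi(c_{\ov{l}_1},\ov{d}_1,\dots,\ov{d}_r)\ \leftrightarrow\ \neg\varphi(c_{\ov{l}_2},\ov{d}_1,\dots,\ov{d}_r).$$

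Next I would re-list $\{m,m_1,\dots,m_r\}$ in increasing order as $j_1<\dots<j_{r+1}$, with $m=j_q$ for some $q$. Recalling that, via the fixed enumeration $(k_i\mid i<\gm)$ of $\leftexp{h>}{\og}$, each element $c_{\la j_t\ra^\frown k_i}$ is exactly the $i$-th coordinate of $\ov{C_{j_t}}$, the displayed equivalence can be rewritten as a single formula over $\{b_\emptyset\}=\{c_\emptyset\}$ asserted of the tuple $(\ov{C_{j_1}},\dots,\ov{C_{j_{r+1}}})$ (picking out the relevant coordinates). Applying the $<$-basedness of $(\ov{C_m}\mid m<\og)$ on $(\ov{B_m}\mid m<\og)$ over $\{b_\emptyset\}$ from Claim \ref{c3}, we obtain indices $s_1<\dots<s_{r+1}$ for which the same formula holds of $(\ov{B_{s_1}},\dots,\ov{B_{s_{r+1}}})$. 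Reading this back through the natural bijections $K_{j_t}\bij K_{s_t}$, and setting $m':=s_q$, letting $\ov{l}'_1,\ov{l}'_2$ be the images of $\ov{l}_1,\ov{l}_2$ in $K_{m'}$ and $\ov{d}'_l$ the image of $\ov{d}_l$ in the appropriate $B_{s_t}$ with $t\neq q$, we arrive at
$$\vDash \varphi(b_{\ov{l}'_1},\ov{d}'_1,\dots,\ov{d}'_r)\ \leftrightarrow\ \neg\varphi(b_{\ov{l}'_2},\ov{d}'_1,\dots,\ov{d}'_r).$$

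Finally, since each natural bijection is an $L_\s$-isomorphism, $\ov{l}'_1\sim_{\s}\ov{l}'_2$; and every $\ov{d}'_l$ lies in $B_{s_t}$ with $s_t\neq s_q=m'$, hence in $\bigcup_{i\neq m'}B_i$. This exhibits $B_{m'}$ as not $\s$-indiscernible over $\{b_\emptyset\}\cup\bigcup_{i\neq m'}B_i$, contradicting Claim \ref{c1}, and the claim follows. I expect the only real difficulty to be the bookkeeping: tracking the correspondence between coordinates of the infinite tuples $\ov{C_m},\ov{B_m}$ and tree elements of $K_m$ through $(k_i)$ and the natural bijections, and checking that the order-preserving index map handed to us by $<$-basedness keeps the distinguished block $j_q$ distinct from all the others, so that the resulting configuration genuinely contradicts Claim \ref{c1} rather than some weaker statement.
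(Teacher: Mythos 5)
Your proof is correct and follows essentially the same route as the paper: push a hypothetical failure of $\s$-indiscernibility of $C_m$ through the $<$-basedness of $(\ov{C_m})$ on $(\ov{B_m})$ and the natural bijections, then contradict Claim~\ref{c1}. The paper just economizes on notation by writing out only the $m=0$ case (so the remaining indices are automatically $>0$ and no re-indexing is needed) and declaring the general case ``notationally more difficult''; your version supplies exactly that bookkeeping, and the worry you flag at the end is automatically handled because $<$-basedness hands you a strictly increasing tuple $s_1<\dots<s_{r+1}$, so $s_q$ is distinct from the others.
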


\textit{Proof of Claim \ref{c5}}. We show that $C_0$ is $\s$-indiscernible over $\{c_\emptyset\} \cup \bigcup_{i>0} C_i$; the general case is only notationally more difficult.  Suppose not.  Then for some $n<\omega$ there exist tuples $\ov{\ell}_1 \sim_{\s} \ov{\ell}_2$ from $K_0$, $\ov{k}_i$ from $K_i$ for $1 \leq i \leq n$, and a formula $\varphi (\ov{x}_0, \ldots, \ov{x}_n)$ over $\{ c_{\emptyset} \}$, such that
\[ \vDash \varphi (c_{\ov{\ell}_1}, c_{\ov{k}_1}, \ldots, c_{\ov{k}_n}) \leftrightarrow \neg \varphi (c_{\ov{\ell}_2}, c_{\ov{k}_1}, \ldots, c_{\ov{k}_n})
\]

\nin Since $(\ov{C}_i \mid i<\og )$ is $<$-based on $(\ov{B}_i \mid i<\og )$ over $\{ c_{\emptyset} \}$, there exist $s(0) < \ldots <s(n) <\og$, $\ov{\ell}'_1, \ov{\ell}'_2\in K_{s(0)}$, and $\ov{k}'_i\in K_{s(i)}$ for $i=1, \ldots n$, where $\ov{\ell}'_1, \ov{\ell}'_2$ are the images of $\ov{\ell}_1, \ov{\ell}_2$ under the natural bijection $K_0 \bij K_{s(0)}$, each $\ov{k}'_i$ is the image of $\ov{k}_i$ under the natural bijection $K_i \bij K_{s(i)}$, and such that
\[ \vDash \varphi (b_{\ov{\ell}'_1}, b_{\ov{k}'_1}, \ldots, b_{\ov{k}'_n}) \leftrightarrow  \neg \varphi (b_{\ov{\ell}'_2}, b_{\ov{k}'_1}, \ldots, b_{\ov{k}'_n})
\]

\nin But this is impossible since clearly $\ov{\ell}'_1 \sim_{\s} \ov{\ell}'_2$ and, by Claim \ref{c1}, $B_{s(0)}$ is $s$-indiscernible over $\{b_{\emptyset}\}\cup \bigcup_{i\neq s(0)} B_i$. This completes the proof of Claim \ref{c5}.

\ms

Now Claims \ref{c3} and \ref{c5} imply that $C$ is $s$-indiscernible. Moreover, $C$ is $s$-based on $B$ which is $s$-based on $A$. Hence $C$ is $s$-based on $A$, since $s$-basedness is a transitive property. This completes the induction step and the proof of Theorem \ref{s-model}.
\end{proof}

\begin{thm}[$\n$-modeling theorem]\label{1-model} Let $A = \{a_\eta \mid \eta \in \W \}$ be an $\W$-indexed set of parameters from $\CM$.  There exists $\n$-indiscernible $B = \{b_\eta \mid \eta \in \W\}$ $\n$-based on $A$.
\end{thm}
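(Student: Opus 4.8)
\noindent\emph{Proof plan.}
The naive approach---rerunning the induction of the proof of Theorem~\ref{s-model} with $L_\n$ in place of $L_\s$---does not go through. The obstruction is that the $L_\n$-analogue of Observation~\ref{helpful} is false: for $\eta\in K_i$, $\nu\in K_j$ with $i\ne j$ one still has $\eta\wedge\nu=\emptyset$, $\eta$ and $\nu$ $\unlhd$-incomparable, and $\eta\lx\nu$ exactly when $i<j$, but whether $\eta\len\nu$ is \emph{not} a function of $i$ and $j$---it depends on the actual levels of $\eta$ and $\nu$, which $\n$-similarity inside a single $K_m$ fails to control. (This is the same phenomenon responsible for the falsity of Claim~\ref{250}.) So instead I would build a very tall and very wide $\s$-indiscernible set dominated by $A$ and then harvest an $\n$-indiscernible sub-configuration from it by a tree partition theorem.

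By Remark~\ref{emtp} it suffices to realize $\textrm{EM}_{\n}(A)$ together with the schema $\{\,\psi(\bar x_{\etb})\leftrightarrow\psi(\bar x_{\nub}) : \etb\sim_\n\nub,\ \psi\in L\,\}$; a finite fragment of this type involves only finitely many $L$-formulas---hence only formulas from some finite $\Delta$ and tuples of length at most some $n$---and finitely many indices. So, by compactness, fix such $\Delta$ and $n$; it is then enough to produce a $\W$-indexed $B_0=\{b^0_\eta\mid\eta\in\W\}$ that is $\Delta$-$\n$-indiscernible on tuples of length $\le n$ and satisfies the $\Delta$-part of $\textrm{EM}_{\n}(A)$. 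Using Theorem~\ref{s-model} take an $\s$-indiscernible $C=\{c_\eta\mid\eta\in\W\}$ that is $\s$-based on $A$, and grow it by compactness to an $\s$-indiscernible $C^\lambda=\{c_\eta\mid\eta\in\leftexp{\omega >}{\lambda}\}$ with $\lambda$ large enough to run the partition theorem for $\Delta$ and $n$ (a bound of the form $\bnp$, as specified in the Appendix). Since any finite quantifier-free $L_\s$-type realized in $\leftexp{\omega >}{\lambda}$ is already realized in $\W$---the levels are finite and a finite tuple uses only finitely many siblings---this growing keeps $C^\lambda$ $\s$-indiscernible and keeps it \emph{dominated by $A$}: for every tuple $\etb$ from $\leftexp{\omega >}{\lambda}$ and every $\varphi$ there is a tuple $\nub$ from $\W$ with $\qt^{L_\s}(\nub;\W)=\qt^{L_\s}(\etb;\leftexp{\omega >}{\lambda})$ and $\bar c_{\etb}\equiv_\varphi\bar a_{\nub}$ (a fortiori with $L_\n$ in place of $L_\s$).

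Now apply the Shelah tree partition theorem---the content of \citep[{App. 2.6, 2.7}]{sh90}, exposited in the Appendix, cf.\ Theorem~\ref{1}---to the coloring of meet-closed tuples of $\leftexp{\omega >}{\lambda}$ that sends $\etb$ to the $\Delta$-type of $\bar c_{\etb}$: this produces an $L_\n$-embedding $\iota\colon\W\hookrightarrow\leftexp{\omega >}{\lambda}$ (carrying each level of $\W$ into a single level of $\leftexp{\omega >}{\lambda}$, levels respecting $\len$) whose image is $\Delta$-homogeneous, i.e.\ for meet-closed $\etb$ from $\iota(\W)$ the $\Delta$-type of $\bar c_{\etb}$ depends only on $\qt^{L_\n}(\etb;\leftexp{\omega >}{\lambda})$. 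Put $b^0_\eta:=c_{\iota(\eta)}$. Because $\iota$ is an $L_\n$-embedding, $\qt^{L_\n}(\iota(\etb);\leftexp{\omega >}{\lambda})=\qt^{L_\n}(\etb;\W)$, so $\Delta$-homogeneity together with Remark~\ref{251} (which reduces matters to meet-closed tuples) says precisely that $B_0$ is $\Delta$-$\n$-indiscernible on tuples of length $\le n$; and feeding $\iota(\etb)$ into the domination property above gives, for $\etb$ from $\W$ and $\varphi\in\Delta$, some $\nub$ from $\W$ with $\qt^{L_\n}(\nub;\W)=\qt^{L_\n}(\etb;\W)$ and $\bar b^0_{\etb}=\bar c_{\iota(\etb)}\equiv_\varphi\bar a_{\nub}$, so $B_0\vDash$ the $\Delta$-part of $\textrm{EM}_{\n}(A)$. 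Compactness then assembles the $B_0$'s into the desired $\n$-indiscernible $B=\{b_\eta\mid\eta\in\W\}$ that is $\n$-based on $A$.

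The step I expect to be the main obstacle is the tree partition theorem itself---both identifying the correct iterated-exponential value of $\lambda$ and proving the ``canonical coloring'' form, that any finite coloring of the meet-closed $n$-tuples of $\leftexp{\omega >}{\lambda}$ is constant on $L_\n$-quantifier-free type along some sub-copy of $\W$. This is exactly what the Appendix (following \citep[{App. 2.6, 2.7}]{sh90}) supplies; the remaining steps are bookkeeping, the one delicate point being the check that enlarging the $\s$-indiscernible index set does not cost us its domination by $A$.
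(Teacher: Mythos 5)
Your diagnosis of why the naive induction fails (the $L_\n$-analogue of Observation~\ref{helpful} is false, because $\eta\len\nu$ across $K_i$'s is not a function of $i,j$) is exactly right, and the overall architecture---reduce by compactness to a finite fragment, get $\s$-indiscernibles from Theorem~\ref{s-model}, then pass to a sub-configuration---is the right shape. But the central step has a gap: the Shelah tree partition theorem (Theorem~\ref{1} of the Appendix, i.e.\ \citep[App.~2.6]{sh90}) only produces an $L_\s$-substructure $\I\subseteq\leftexp{n\geq}{\lambda}$ that is $\sim_\s$-homogeneous for the given coloring. It does \emph{not} give homogeneity across $\sim_\n$-classes, which is what $\n$-indiscernibility requires. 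In fact once you have grown $C$ to an $\s$-indiscernible $C^\lambda$, your coloring ``$\etb\mapsto\tp_\Delta(\bar c_{\etb})$'' is already constant on $\sim_\s$-classes, so applying Theorem~\ref{1} to it accomplishes nothing. The crux of going from $\s$- to $\n$-indiscernibility is never addressed: an $L_\n$-embedding $\iota$ preserves $\len$ but not levels, so $\etb\sim_\n\nub$ in $\W$ does not make $\iota(\etb)$ and $\iota(\nub)$ $\sim_\s$-similar in the big tree, and $\s$-homogeneity there tells you nothing about whether they receive the same color.

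What is missing is precisely the Ramsey-on-levels observation that the paper uses: since $C$ is already $\s$-indiscernible, for meet-closed tuples in a fixed $\sim_\n$-class the $\Delta$-type of $\bar c_{\nub}$ depends only on the \emph{set of levels} $P(\nub)\subseteq\omega$ (because fixing the $\n$-type and the level set determines the $\s$-type). This reduces the problem to coloring $k$-element subsets of $\omega$ with finitely many colors, where ordinary Ramsey gives an infinite homogeneous $H\subseteq\omega$, and one then restricts to a sub-copy of $\W$ whose image has all levels inside $H$. This argument never leaves $\W$ and uses no Erd\H{o}s--Rado-scale cardinals at all, so the blow-up to $\leftexp{\omega>}{\lambda}$ in your plan is both unnecessary and insufficient. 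Your sketch would become correct if, after obtaining $\s$-indiscernible $C$, you replaced the appeal to Theorem~\ref{1} by this finite-combinatorial Ramsey argument on the level sets; alternatively you could formulate and prove a genuinely $L_\n$ version of the partition theorem, but that is a strictly stronger statement than the one the Appendix supplies, and you would effectively be re-deriving the level-Ramsey step inside it.
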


\begin{proof}
By Theorem \ref{s-model}, there
is an $\s$-indiscernible $C := \{ c_{\eta}|\ \eta\in \W \}$ $\s$-based on $A$.  By Remark \ref{reduct} and taking reducts, it is clear that $C$ is also $\n$-based on $A$, as every quantifier-free $L_\n$ type is a union of quantifier-free $L_{\s}$-types.
Since $\n$-basedness is a transitive property, it suffices to find a $\n$-indiscernible set $\{b_\eta \mid \eta \in \W\}$ that is $\n$-based on $C$.

\smallskip

By Remark \ref{emtp}, it suffices to show that the type $\textrm{EM}_\n(C) \cup \Psi( x_{\eta}|\ \eta\in \W)$ is consistent, where $\Psi( x_{\eta}|\ \eta\in \W)$ is the type describing the $\n$-indiscernibility of the $x_\eta$'s.

\smallskip

By compactness, it suffices to fix any finite set $\Delta$ of formulas and any finitely many meet-closed tuples $\etb_1, \cdd, \etb_t$ in ${}^{\og>}\og$, and assume that the type $\Psi(x_\eta\mid \eta\in {}^{\og>}\og)$ expresses the str-indiscernibility  of $\{x_\eta \mid \eta \in {}^{\og>}\og\}$ with respect to $\etb_1, \cdd, \etb_t$ and $\Delta$.

\smallskip

Given any subset $E\sbb {}^{\og>}\og$, let  $P(E):= \{ \ell(\eta) \mid \eta\in E \}$. 
(Recall $\ell(\eta)$ denotes the \textit{level} of $\eta$ in the tree ${}^{\og>}\og$.) 
 By $P$($\nub$) we mean $P$(ran($\nub$)).
In particular, $P(E) \sbb \og$.

\smallskip

For each $i=1, \cdd, t$, let $\mathcal{C}(\etb_i)$ denote the set consisting of all the tuples $\nub$ in ${}^{\og>}\og$ which are $\sim_\n$-equivalent to $\etb_i$. Note that, for each $i$,  $P(\nub)$ has a fixed size for all $\nub\in \mathcal{C}(\etb_i)$. Let $k_i$ denote this fixed size. In fact, for our purposes, it suffices to assume $k_1  = \cdd = k_t (:=k)$.

\smallskip

\begin{obs} For each $i$, the following holds: for any $\nub_1, \nub_2\in \mathcal{C}(\etb_i)$, if $P(\nub_1) = P(\nub_2)$ then $\nub_1\sim_s \nub_2$.
\end{obs}

\smallskip

Since $C=\{ c_{\eta} \mid \eta\in {}^{\og>}\og \}$ is $s$-indiscernible, we can then well-define a map which sends each subset $A\sbb \og$ of size $k$ to a tuple of uniquely determined types $($tp$_\Delta(\cb_{\nub_1}), \cdd,$ tp$_\Delta(\cb_{\nub_t}))$ where $\nub_i\in \mathcal{C}(\etb_i)$ with $P(\nub_i) = A$. Since there are only finitely many $\Delta$-types, this is a finite coloring map. Hence, by Ramsey's theorem, there exists an infinite homogeneous subset $H\sbb \og$. Then choose any str-embedding $f\colon {}^{\og>}\og \imp {}^{\og>}\og$\,  such that $P(Im(f)) \sbb H$, and define a tree $\{ d_{\eta} \mid \eta\in {}^{\og>}\og \}$ by letting $d_\eta: = c_{f(\eta)}$. Then $\{ d_{\eta} \mid \eta\in {}^{\og>}\og \}$ satisfies EM$_{\textrm{str}}(C)\cup \Psi(x_\eta\mid \eta\in {}^{\og>}\og)$, and this completes the proof.
\end{proof}

\section{Applications}\label{37}

In the following, $\W$ could easily be replaced by $\leftexp{\omega>}{\lambda}$.

\begin{dfn}
A property $P$ (for theories) is called \emph{pre-$s'$-type} if there exists a partial type $\Gamma(x_\eta \mid \eta \in \W)$ such that
\be
\item for any theory $T$, $T$ has $P$ iff $\Gamma$ is satisfiable in some model of $T$,
\item for any $A=\{a_\eta \mid \eta \in \W\}$ realizing $\Gamma$, EM$_{\s'}(A)$ $\vdash$ $\Gamma(x_\eta \mid \eta \in \W)$.
\ee

\smallskip

Given a pre-$\s'$-type property $P$, we shall say that a set of parameters $A= \{a_\eta\mid \eta\in \W\}$ \emph{witnesses} $P$ if $A$ realizes the partial type  $\Gamma (x_\eta \mid \eta\in \W)$ associated with $P$.

\smallskip

By an \emph{$\s'$-type property}, we mean a (possibly infinite) disjunction of pre-$s'$-type properties.
\end{dfn}

\begin{rmk} \label{stype}
Note that, for any formula $\varphi$, the property ``$\varphi$ witnesses $k$-TP" is a pre-$\s$-type property. And the property ``$\varphi$ witnesses (weak) $k$-TP$_1$"  is a pre-$\n$-type property. Hence we have the following:
\be
\item $k$-TP is an $\s$-type property.
\item Weak-$k$-TP$_1$ and $k$-TP$_1$ are  $\n$-type properties.
\ee
\end{rmk}

 \begin{thm}\label{main2}
Suppose a theory $T$ has an $\s'$-type property witnessed by a set of parameters $\{a_\eta \mid \eta\in \W \}$. Then we may choose  $\{a_\eta\mid \eta\in \W\}$ to be $\s'$-indiscernible.  In particular, if $T$ has $k$-TP witnessed by a formula $\varphi(x, y)$ and parameters $\{a_{\eta}|\ \eta\in \W \}$, then we may choose $\{a_{\eta}|\ \eta\in \W \}$ to be $s$-indiscernible. Similarly, if $T$ has $k$-TP$_1$ or weak $k$-TP$_1$ witnessed by a formula $\varphi(x, y)$ and parameters $\{a_{\eta}|\ \eta\in \W \}$, then we may choose $\{a_{\eta}|\ \eta\in \W \}$ to be $\n$-indiscernible.
 \end{thm}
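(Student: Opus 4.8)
The plan is to reduce the general statement to the modeling theorems (Theorems \ref{s-model} and \ref{1-model}) via the machinery of pre-$\s'$-type properties, and then handle the disjunction by a routine case split. First I would treat a single pre-$\s'$-type property $P$ with associated partial type $\Gamma(x_\eta \mid \eta \in \W)$. Suppose $A = \{a_\eta \mid \eta \in \W\}$ witnesses $P$, i.e. $A \vDash \Gamma$. Apply the $\s'$-modeling theorem: there is an $\s'$-indiscernible set $B = \{b_\eta \mid \eta \in \W\}$ that is $\s'$-based on $A$. By Remark \ref{emtp}, $B \vDash \textrm{EM}_{\s'}(A)$. Now invoke clause (2) of the definition of pre-$\s'$-type: since $A \vDash \Gamma$, we have $\textrm{EM}_{\s'}(A) \vdash \Gamma(x_\eta \mid \eta \in \W)$, so $B \vDash \Gamma$ as well. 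Hence $B$ is an $\s'$-indiscernible set witnessing $P$, which is the claim: we may replace the original witnesses by indiscernible ones.

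Next I would handle the case of a genuine $\s'$-type property, i.e. a disjunction $P = \bigvee_{j} P_j$ of pre-$\s'$-type properties $P_j$ with associated partial types $\Gamma_j$. If $T$ has $P$ witnessed by $\{a_\eta \mid \eta \in \W\}$, then by definition the set $A$ realizes $\bigvee_j \Gamma_j$ in the relevant sense; here one should note that because each $\Gamma_j$ is a partial type in the fixed variables $(x_\eta)_{\eta \in \W}$, the assertion that $A$ witnesses $P$ means $A \vDash \Gamma_j$ for some particular index $j$. Fix that $j$ and apply the single-property argument above to $P_j$: we obtain $\s'$-indiscernible $B$ with $B \vDash \Gamma_j$, so $B$ witnesses $P_j$ and hence $P$. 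This disposes of the general statement.

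Finally, the two "in particular" clauses are just instantiations. By Remark \ref{stype}, "$\varphi$ witnesses $k$-TP" is a pre-$\s$-type property, so the first clause follows by applying the single-property argument with $\s' = \s$ and the $\s$-modeling theorem. Likewise "$\varphi$ witnesses $k$-TP$_1$" and "$\varphi$ witnesses weak $k$-TP$_1$" are pre-$\n$-type properties by Remark \ref{stype}, so the second clause follows with $\s' = \n$ and the $\n$-modeling theorem. In each case one should check that the resulting indiscernible parameters still use the \emph{same} formula $\varphi$ — this is automatic, since the formula $\varphi$ appears as a fixed symbol inside the partial type $\Gamma$ and basedness/modeling does not alter it.

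I do not expect a serious obstacle here: the content has been front-loaded into the modeling theorems and into the definition of pre-$\s'$-type property (whose clause (2) is precisely the abstraction that makes this argument go through). The only point requiring a little care is the bookkeeping around the disjunction — making sure that "$A$ witnesses the disjunction $\bigvee_j P_j$" is unpacked as "$A \vDash \Gamma_j$ for some fixed $j$" rather than something weaker — and, relatedly, confirming that the modeling theorems are being applied to the \emph{same} indexing set $\W$ that appears in $\Gamma$, so that $B \vDash \textrm{EM}_{\s'}(A) \vdash \Gamma$ makes literal sense. Both are immediate from the definitions as set up in Section \ref{37}.
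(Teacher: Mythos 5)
Your proposal is correct and matches the paper's proof: both reduce the claim to the $\s'$-modeling theorems via $\s'$-basedness, apply Remark~\ref{emtp} to conclude $B \vDash \textrm{EM}_{\s'}(A)$, and then use clause~(2) of the pre-$\s'$-type definition to get $B \vDash \Gamma$. You spell out the disjunction bookkeeping and the ``in particular'' instantiations a bit more explicitly than the paper, but the argument is the same.
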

 
 \begin{proof}
 By the definition of $\s'$-type property, $A:=\{a_\eta\mid \eta\in \W\}$ realizes a certain partial type $\Gamma (x_\eta\mid \eta\in \W )$ associated with the given pre-$\s'$-type property, and moreover $\textrm{EM}_{\s'}(A) \vdash \Gamma$. Then by the $s'$-modeling theorems (Theorems \ref{s-model}, \ref{1-model}), there exists some $\s'$-indiscernible set of parameters $B:=\{b_\eta\mid \eta\in \W\}$ that is $\s'$-based on $A$. Then $B \vDash \textrm{EM}_{\s'}(A)$ (by Remark \ref{emtp}), and hence $B \vDash \Gamma$.
 \end{proof}

For the next result, we will need the notions of \textit{indiscernible array} and \textit{array-basedness}.

\begin{Def}\label{arraydef} Let $\I$ be the structure on $\omega \times \omega$ in the language $L_{\textrm{ar}} = \{ \len, <_2 \}$ with the interpretation: $(i,j) \len (s,t) \Leftrightarrow i<s$, and $(i,j) <_2 (s,t) \Leftrightarrow (i=s)\wedge (j<t)$.

\smallskip

We refer to an $\I$-indexed indiscernible set as an \textit{indiscernible array}; we say $B$ is \textit{array-based} on $A$ if $B$ is based on $A$ as an $\I$-indexed set (see Def.s \ref{iindex}, \ref{50}.)
\end{Def}

Another application of the str-modeling theorem yields the following theorem.

\begin{thm} [array-modeling theorem] \label{arraymodel}
Given any parameters $\{ a^i_j \mid i, j\in \og \}$, there exists an indiscernible array $\{ b^i_j \mid i, j\in \og \}$ which is array-based on $\{ a^i_j \mid i, j\in \og \}$. 
\end{thm}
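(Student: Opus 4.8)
The plan is to deduce this from the $\s$-modeling theorem (Theorem~\ref{s-model}) by coding the array index structure $\I$ into the tree $\W$ and transferring both indiscernibility and basedness along the coding. The level predicates $P_\alpha$ available in $L_{\s}$ are exactly what makes the transfer work in both directions, so the $\s$-version is the natural tool to invoke here.

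First I would set up the coding. Let $\Phi\colon\og\times\og\to\W$, $\Phi(i,j):=\la i\ra^{\frown}\la j\ra$, an injection with image $P_2(\W)$, and write $\Phi(\bar c)$ for the componentwise image of a tuple $\bar c$ from $\og\times\og$ and $\bar b_{\Phi(\bar c)}$ for $(b_{\Phi(c_1)},\dots,b_{\Phi(c_n)})$. Extend the given array to a $\W$-indexed family by $a_{\la i,j\ra}:=a^i_j$ and $a_\eta:=a^0_0$ (any fixed choice) for $\ell(\eta)\neq 2$. The combinatorial heart of the argument is the equivalence
\[
\qt(\bar c;\I)=\qt(\bar d;\I)\iff \qt^{L_{\s}}(\Phi(\bar c);\W)=\qt^{L_{\s}}(\Phi(\bar d);\W)
\]
for all finite tuples $\bar c,\bar d$ from $\og\times\og$. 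Both sides record precisely the weak linear order on the first coordinates together with, inside each class of equal first coordinate, the linear order on the second coordinates: on the $L_{\s}$-side every $\Phi(i,j)$ lies on level $2$, the meet $\Phi(i,j)\wedge\Phi(s,t)$ sits on level $0$ iff $i\neq s$ and on level $1$ iff $i=s$ but $j\neq t$ (so the $P_\alpha$'s detect which case occurs and when two such meets coincide), and $\lx$ among the images, read together with this meet data, separates ``$i<s$'' from ``$i=s\wedge j<t$''. This equivalence genuinely uses the level predicates; its $L_{\n}$-analogue is false, since e.g. $(\la 0,0\ra,\la 1,0\ra)$ and $(\la 0,0\ra,\la 0,1\ra)$ are $\n$-similar but code different array configurations.

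Now apply Theorem~\ref{s-model} to $\{a_\eta\mid\eta\in\W\}$, obtaining $\s$-indiscernible $B=\{b_\eta\mid\eta\in\W\}$ that is $\s$-based on it, and put $b^i_j:=b_{\la i,j\ra}$. Array-indiscernibility is immediate: $\qt(\bar c;\I)=\qt(\bar d;\I)$ gives $\qt^{L_{\s}}(\Phi(\bar c);\W)=\qt^{L_{\s}}(\Phi(\bar d);\W)$ by the equivalence, hence $\bar b_{\Phi(\bar c)}\equiv\bar b_{\Phi(\bar d)}$ by $\s$-indiscernibility of $B$. For array-basedness, fix $\varphi$ and an array tuple $\bar c=((i_1,j_1),\dots,(i_n,j_n))$; applying $\s$-basedness of $B$ to the tree tuple $\Phi(\bar c)$ produces $\bar\nu=(\nu_1,\dots,\nu_n)$ from $\W$ with $\qt^{L_{\s}}(\bar\nu;\W)=\qt^{L_{\s}}(\Phi(\bar c);\W)$ and $\bar b_{\Phi(\bar c)}\equiv_\varphi(a_{\nu_1},\dots,a_{\nu_n})$. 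Since $\qt^{L_{\s}}(\Phi(\bar c);\W)$ contains $P_2(x_m)$ for every $m$, each $\nu_m$ has length $2$, say $\nu_m=\la s_m,t_m\ra$, so $a_{\nu_m}=a^{s_m}_{t_m}$ is an actual array entry. Setting $\bar d:=((s_1,t_1),\dots,(s_n,t_n))$ we get $\Phi(\bar d)=\bar\nu$, whence $\qt(\bar d;\I)=\qt(\bar c;\I)$ by the equivalence and $\bar b_{\Phi(\bar c)}\equiv_\varphi(a_{\Phi(d_1)},\dots,a_{\Phi(d_n)})$. Thus $\{b^i_j\}$ is array-based on $\{a^i_j\}$.

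The main work is the displayed equivalence, and in particular its right-to-left direction: one must check carefully that $\lx$ together with the levels of the pairwise meets of the coded nodes loses no information about the two separate orders (across rows, and within rows) or about which meets coincide. This bookkeeping, together with the observation that $P_2$ forces the basedness witnesses $\nu_m$ onto level $2$, is where the level predicates of $L_{\s}$ are indispensable; once the equivalence is in place, the rest is routine.
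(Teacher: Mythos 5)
Your proof is correct, and it takes a genuinely different route from the paper's. The paper views $\W$ as an $L_{\textrm{ar}}$-structure (by interpreting $<_2$ as same-level-and-$\lx$), chooses an $L_{\textrm{ar}}$-embedding $f\colon \W \hookrightarrow \og\times\og$, pulls the array back along $f$ to get a $\W$-indexed family $c_\eta := a_{f(\eta)}$, applies the \emph{str}-modeling theorem (Theorem \ref{1-model}) to get a str-indiscernible $\{d_\eta\}$, and finally extracts the array $b^i_j := d_{\eta_i^\frown\langle j+1\rangle}$ where $\eta_i$ is the all-zero node of length $2i$; placing the $i$-th row at level $2i+1$ is what lets the $\len$ relation of $L_{\n}$ separate rows, and the sibling structure plus $\lx$ handles the within-row order. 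You instead embed $\og\times\og$ into level $2$ of $\W$, invoke the $\s$-modeling theorem directly (bypassing the extra Ramsey step inside Theorem \ref{1-model}), and use the level predicates $P_\alpha$ to make the correspondence between $L_{\textrm{ar}}$-qftps and $L_{\s}$-qftps exact in both directions; as you note, $P_2$ also forces basedness witnesses back onto level $2$, which is what makes the transfer of basedness clean. Your observation that the $L_{\n}$-analogue of the coding equivalence fails is accurate and explains why the paper is forced to spread the rows across levels when it uses the str-language. Net effect: the paper's proof is phrased to showcase the str-modeling theorem, while yours is shorter and more elementary, relying only on the $\s$-modeling theorem at the cost of genuinely using the level predicates.
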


\noindent \textit{Proof of Theorem \ref{arraymodel}}. 
Given any $\{ a^i_j \mid i, j\in \og \}$, we shall define an embedding $f\colon  {}^{\og>}\og \imp \og \times \og$  and consider a tree $\{ c_\eta\mid \eta\in {}^{\og>}\og \}$ defined by $c_\eta:= a_{f(\eta)}$. 

\smallskip

To define $f$, we temporarily view ${}^{\og>}\og$ as an $L_{\textrm{ar}}$-structure in which $<_2$ is interpreted as   $\eta <_2 \nu \Leftrightarrow (\ell(\eta)=\ell(\nu)) \wedge (\eta <_{\textrm{lex}} \nu)$. (And $\len$ is interpreted as the level relation in the tree, as usual.) Then we define $f$ to be any $L_{\textrm{ar}}$-embedding  ${}^{\og>}\og \hookrightarrow \og\times \og$. (It is clear that such an $L_{\textrm{ar}}$-embedding exists.)

\smallskip

Now, by Theorem \ref{1-model}, there exists  some str-indiscernible tree $\{ d_\eta \mid \eta\in {}^{\og>}\og \}$ which is str-based on $\{ c_\eta \mid \eta\in {}^{\og>}\og \}$.

\smallskip

Let $\eta_i$ be the sequence of zeroes $\eta_i : 2i \ar \{0\}$ in ${}^{\og>}\og$.
Then define an array $\{ b^i_j \mid i, j\in \og \}$ by letting $b^i_j: = d_{{\eta_i}^{\smallfrown}\langle j+1 \rangle}$. One may check that  $\{ b^i_j \mid i, j\in \og \}$ is an indiscernible array which is array-based on $\{ a^i_j \mid i, j\in \og \}$. $\ \square$

\bs

As an immediate consequence, we have the following lemma:
\begin{lem}\label{main 3}
Assume a formula $\varphi(x,y)$ and parameters $\{a^i_j |\  i,j\in\omega\}$ witness $k$-TP$_2$ (in some sufficiently saturated model). Then we may assume such $\{a^i_j |\  i,j\in\omega\}$ is array-indiscernible.
\end{lem}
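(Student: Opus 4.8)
The plan is to deduce Lemma \ref{main 3} directly from the array-modeling theorem (Theorem \ref{arraymodel}), in exactly the same way that Theorem \ref{main2} was deduced from the $\s'$-modeling theorems. First I would observe that the property ``$\varphi$ witnesses $k$-TP$_2$'' is, when phrased in terms of the index structure $\I$ on $\omega\times\omega$ from Definition \ref{arraydef}, captured by a partial type $\Gamma(x^i_j \mid i,j\in\omega)$ that depends only on the quantifier-free $L_{\textrm{ar}}$-type of the indices: the consistency of $\{\varphi(x,a^i_{f(i)}) \mid i<\omega\}$ for every $f\colon\omega\to\omega$ and the $k$-inconsistency of each row $\{\varphi(x,a^i_j)\mid j<\omega\}$ are both expressible by (infinitely many) formulas in the $x^i_j$, and each such formula involves only finitely many indices whose mutual $\len,<_2$-relations determine which ``column'' and ``row'' constraints apply. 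Hence $\{a^i_j\}$ realizes $\Gamma$, and moreover $\textrm{EM}_{\I}(A)\vdash\Gamma$, so $\Gamma$ is of the same ``pre-type'' shape as the properties in Theorem \ref{main2}, just with $\I$ in place of the $L_{\s'}$-structure on $\W$.

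Next I would apply Theorem \ref{arraymodel} to the given parameters $A=\{a^i_j\mid i,j\in\omega\}$ to obtain an indiscernible array $B=\{b^i_j\mid i,j\in\omega\}$ that is array-based on $A$. By the analogue of Remark \ref{emtp} for the index structure $\I$ — namely that $B$ being based on $A$ as an $\I$-indexed set is the same as $B\vDash\textrm{EM}_{\I}(A)$ — we get $B\vDash\textrm{EM}_{\I}(A)$, and therefore $B\vDash\Gamma$ since $\textrm{EM}_{\I}(A)\vdash\Gamma$. Thus $\varphi$ and the array-indiscernible $B$ still witness $k$-TP$_2$, and (working in a sufficiently saturated model, or passing to the monster) we may simply rename $B$ as our new $\{a^i_j\}$. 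This is the whole argument.

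The step that needs the most care is the first one: verifying that $k$-TP$_2$ is genuinely an ``$\I$-type property'', i.e. that $\textrm{EM}_{\I}(A)$ entails the full partial type $\Gamma$ expressing $k$-TP$_2$. The row-inconsistency condition is unproblematic — $k$-inconsistency of $\{\varphi(x,a^i_j)\mid j<\omega\}$ is equivalent to the $k$-inconsistency of every $k$-element subset of a single row, and any two such $k$-subsets within a row share the same quantifier-free $L_{\textrm{ar}}$-type, so this is captured by $\textrm{EM}_{\I}(A)$. The consistency of the ``paths'' $\{\varphi(x,a^i_{f(i)})\mid i<\omega\}$ is, by compactness, equivalent to the consistency of every finite piece $\{\varphi(x,a^i_{f(i)})\mid i\in s\}$ for finite $s\subseteq\omega$, and any two tuples $((i,f(i)))_{i\in s}$ and $((i,g(i)))_{i\in s}$ with $f,g$ injective-enough on $s$ realize the same quantifier-free $L_{\textrm{ar}}$-type (different rows, arbitrary columns), so these finite consistency statements are also in $\textrm{EM}_{\I}(A)$; the one subtlety is handling $f$ that are not injective, which one absorbs by noting that any finite configuration of index pairs arising from an arbitrary $f$ embeds, up to $L_{\textrm{ar}}$-type, into one arising from an injective $f$ on a possibly larger index set, so no new constraints are imposed. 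Once this bookkeeping is done the lemma follows immediately, which is why the statement is labelled ``an immediate consequence.''
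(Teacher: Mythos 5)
Your proof is correct and follows the same route as the paper's one-line argument: apply the array-modeling theorem to obtain an array-indiscernible $B$ array-based on $A$, and observe that the $k$-TP$_2$ conditions transfer from $A$ to $B$ because both the row $k$-inconsistency and the finite path-consistency statements are determined by the quantifier-free $L_{\textrm{ar}}$-type of the index tuples. (The one ``subtlety'' you flag about non-injective $f$ is actually a non-issue: any finite piece $\{\varphi(x,a^i_{f(i)}) \mid i\in s\}$ has index pairs $(i,f(i))$ with pairwise distinct first coordinates, so all such tuples share the same quantifier-free $L_{\textrm{ar}}$-type regardless of whether $f$ is injective, and no embedding into a larger configuration is needed.)
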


\begin{proof}
By Theorem \ref{arraymodel}, there exists an indiscernible array $\{ b^i_j \mid i, j\in \og \}$ which is array-based on $\{a^i_j |\  i,j\in\omega\}$. Then clearly $\varphi(x,y)$ still witnesses $k$-TP$_2$ with $\{ b^i_j \mid i, j\in \og \}$.
\end{proof}

\bs

For completeness we repeat the proof of \citep[{Prop. 13}]{ad07}.

\begin{prop}\label{ktp2}
If a formula $\varphi(x, y)$ witnesses $k$-TP$_2$ ($k\geq 2$) then there exists some finite conjunction $\psi(x, \yb) = \bigwedge_{i=1}^n \varphi(x, y_i)$ witnessing TP$_2$. Hence, a theory has TP$_2$ iff it has $k$-TP$_2$ for some $k\geq 2$.
\end{prop}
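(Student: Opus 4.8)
The plan is to reduce $k$-TP$_2$ to $2$-TP$_2$ by passing to a conjunction of instances of $\varphi$, using Lemma \ref{main 3} to arrange that the witnessing array is already indiscernible. So first I would fix a formula $\varphi(x,y)$ and parameters $\{a^i_j \mid i,j\in\og\}$ witnessing $k$-TP$_2$, and invoke Lemma \ref{main 3} to assume without loss of generality that $\{a^i_j\}$ is array-indiscernible (in the $L_{\textrm{ar}}$-structure on $\og\times\og$). The point of indiscernibility is that the ``path-consistency'' and ``$k$-inconsistency on a row'' conditions now hold in a uniform, type-determined way across the array, which is what lets us shrink the inconsistency parameter.

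Next I would analyze what $k$-inconsistency of each row $\{\varphi(x,a^i_j)\mid j<\og\}$ gives us. Since the row is an indiscernible sequence, $k$-inconsistency of the whole row is equivalent to $k$-inconsistency of every increasing $k$-tuple from the row, and more to the point, there is a least $m\le k$ with $2\le m$ such that some (equivalently every, by indiscernibility) increasing $m$-tuple $\{\varphi(x,a^i_{j_1}),\dots,\varphi(x,a^i_{j_m})\}$ is inconsistent while every increasing $(m-1)$-tuple is consistent. (If $m=2$ already we are essentially done with a single row; the interesting case is $m\ge 3$.) Now define $\psi(x;y_1,\dots,y_{m-1}) := \bigwedge_{l=1}^{m-1}\varphi(x,y_l)$, and build a new array by grouping: along row $i$, set $b^i_j := (a^i_{(m-1)j}, a^i_{(m-1)j+1}, \dots, a^i_{(m-1)j+m-2})$, i.e. cut the $i$-th row into consecutive blocks of length $m-1$. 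Then $\{\psi(x;b^i_j)\mid j<\og\}$ is $2$-inconsistent: two distinct blocks $b^i_j, b^i_{j'}$ together contain $2(m-1)\ge m$ indices from row $i$, so $\psi(x;b^i_j)\wedge\psi(x;b^i_{j'})$ implies an inconsistent increasing $m$-subtuple of $\varphi$-instances. And for path-consistency: given $f\colon\og\to\og$, the set $\{\psi(x;b^i_{f(i)})\mid i<\og\}$ unwinds to $\{\varphi(x,a^i_j)\mid i<\og,\ j\in\text{block }f(i)\text{ of row }i\}$, which is of the form $\{\varphi(x,a^i_{g(i)})\mid i<\og\}$ — no wait, it is finitely many columns per row; but it is still contained in a path-type family once we note it is $\{\varphi(x,a^i_j)\mid i<\og, j\in S_i\}$ with each $S_i$ finite, and any such family with one finite block per row is consistent because for each $i$ the block gives $\le m-1$ instances which are jointly consistent, and... here I need the full strength of $k$-TP$_2$'s path condition applied after possibly re-indexing columns. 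Concretely: each $b^i_{f(i)}$ is a tuple of $m-1$ original parameters from row $i$; choosing for each $i$ the multiset of column-indices in that block, consistency of $\bigcup_i\{\varphi(x,a^i_j)\mid j\in\text{block}\}$ follows from the path-consistency of the original array together with the fact that a single path can meet each row in one column only — so I actually need to first pre-process so that $\psi$ is a conjunction chosen so its instances along a path meet each row once. The cleanest route: replace $\varphi$ by $\varphi$ and cut rows, but index the new array's columns so that block $j$ of row $i$ sits in ``column $j$'', and observe a new-path $f$ selects blocks whose union over $i$, call it the set of original pairs, is covered by finitely many original paths (at most $m-1$ of them), each consistent, and consistency of a union of $\le m-1$ consistent sets is not automatic — so instead use that the union over $i$ of one block per row is itself a single ``generalized path'' hitting each row in a contiguous interval, and apply indiscernibility/compactness to reduce to one genuine path. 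This is the step I expect to be the main obstacle.

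The resolution of that obstacle is the standard trick: by array-indiscernibility we may assume the blocks are spread out (re-embed $\og\times\og$ into itself so block $j$ of row $i$ occupies columns far apart), and then the union-over-$i$-of-one-block-per-row, being at most $m-1$ columns per row located in a ``staircase'' pattern, is contained in the image of a single path after applying an $L_{\textrm{ar}}$-automorphism-type argument; since $\{a^i_j\}$ is array-indiscernible, consistency transfers. Thus $\{\psi(x;b^i_j)\}$ witnesses $2$-TP$_2$ — that is, TP$_2$ — completing the forward direction. The final sentence (``a theory has TP$_2$ iff it has $k$-TP$_2$ for some $k$'') is then immediate: if $T$ has TP$_2$ it trivially has $k$-TP$_2$ for $k=2$; conversely if $T$ has $k$-TP$_2$ for some $k\ge 2$, witnessed by some $\varphi$ in a saturated model, then by the above $\psi$ witnesses TP$_2$, and TP$_2$ is a property of the theory (witnessed configurations persist to the monster by compactness and saturation).
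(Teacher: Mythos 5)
Your block-size-$(m-1)$ grouping establishes the row 2-inconsistency correctly, but the path-consistency step has a genuine gap, and the ``spread the blocks out'' repair cannot close it. The trouble you correctly identified is real: when $m\geq 3$, a path $f$ in the new array $\{b^i_j\}$ unwinds to $\{\varphi(x,a^i_j)\mid i<\omega,\ j\in\text{block }f(i)\}$, a ``fat path'' selecting $m-1\geq 2$ columns from every row, and $k$-TP$_2$ gives you no control over such configurations. Array-indiscernibility only says that all fat paths of a given width have the same consistency status --- it does not say that status is ``consistent.'' And re-embedding cannot help: every $L_{\textrm{ar}}$-embedding of $\omega\times\omega$ into itself preserves the level relation $\len$, hence sends paths to paths (one column per level), so no re-indexing can place a set that meets a row in two or more columns inside the image of a single path. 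Concretely, nothing rules out a $k$-TP$_2$ array where already width-$2$ fat paths are inconsistent, in which case your $\{b^i_j\}$ does not witness TP$_2$.

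The paper's proof (following Adler) negotiates this with an induction on $k$ plus a case split on exactly the quantity you need: whether the minimal fat path $\{\varphi(x,a^i_0)\wedge\varphi(x,a^i_1)\mid i<\omega\}$ of width $2$ is consistent. If yes, array-indiscernibility upgrades this to consistency of all width-$2$ fat paths, so pairing columns within each row ($b^i_j:=a^i_{2j}a^i_{2j+1}$) yields a genuine $\lceil k/2\rceil$-TP$_2$ array for $\varphi(x,y_0)\wedge\varphi(x,y_1)$, and one finishes by the inductive hypothesis. If no, compactness produces a finite $n$ with $\{\varphi(x,a^i_0)\wedge\varphi(x,a^i_1)\mid i<n\}$ inconsistent, and one instead groups $n$ consecutive \emph{rows} while keeping a single column ($b^i_j:=a^{ni}_j\cdots a^{ni+n-1}_j$); now new paths really are old paths, and row $2$-inconsistency falls out of array-indiscernibility and the chosen finite inconsistency. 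Your single-shot blocking tries to bypass this dichotomy, but the dichotomy (or some separately established control on width-$2$ fat paths) is where the content lies.
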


\begin{proof} We prove by induction on $k$. The case $k=2$ is trivial, so assume that the claim holds for 
$2,...,k-1$, and suppose that a formula $\varphi(x, y)$ witnesses $k$-TP$_2$ with an array $\{ a^i_j \mid i, j\in \og \}$. By Lemma \ref{main 3}, we may assume $\{ a^i_j \mid i, j<\og \}$ is array-indiscernible.

\smallskip

Case I) Assume $\{\varphi(x,a_0^i)\wedge \varphi(x,a_1^i)\mid  i\in\omega\}$ is consistent:
Then the array-indiscernibility of $\{ a^i_j \mid i, j<\og \}$ implies that the conjunction $\gamma (x, \yb) =\varphi(x,y_0)\wedge\varphi(x,y_1)$ and $b_j^i:=a_{2j}^ia_{2j+1}^i$ witness  $\lceil \frac{k}{2} \rceil$-TP$_2$. Then, by the induction hypothesis, some conjunction of $\gamma (x, \yb)$ (hence some conjunction of $\varphi(x, y)$) witnesses TP$_2$.

\smallskip

Case II) Assume $\{\varphi(x,a_0^i)\wedge \varphi(x,a_1^i) \mid i\in\omega\}$ is inconsistent:
Then, by compactness, there exists  some $n$ such that  $\{\varphi(x,a_0^i)\wedge \varphi(x,a_1^i)|\ i<n\}$
is inconsistent.  Then the array-indiscernibility of $\{ a^i_j \mid i, j<\og \}$ implies that the conjunction $\psi(x, \yb) = \varphi(x,y_0)\wedge...\wedge\varphi(x,y_{n-1})$ and $b_j^i:=a_{j}^{ni}a_j^{ni+1}...a_{j}^{ni+n-1}$ witness TP$_2$.
\end{proof}

\begin{lem}\label{252}
Suppose $\varphi (x, y)$ is a formula that does not witness $k$-TP$_2$ for any $k<\og$. If $\varphi (x, y)$ witnesses $m$-TP for some $m<\og$, then it witnesses $N$-TP for some $N<\og$ with some $\n$-indiscernible parameters $\{ b_\eta\mid \eta\in \W \}$.
\end{lem}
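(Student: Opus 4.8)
The plan is to replace the given witnesses by $\s$-indiscernible ones, pass to an $\n$-indiscernible tree based on them, and then split into two cases according to how much inconsistency that tree displays; in the bad case I will manufacture a $k$-TP$_2$ array, contradicting the hypothesis. So I would first invoke Theorem \ref{main2}: since ``$\varphi$ witnesses $k$-TP'' is a pre-$\s$-type property (Remark \ref{stype}), I may assume the witnesses $A=\{a_\eta\mid \eta\in\W\}$ to $m$-TP are $\s$-indiscernible. By the $\n$-modeling theorem (Theorem \ref{1-model}) I then fix an $\n$-indiscernible $B=\{b_\eta\mid \eta\in\W\}$ that is $\n$-based on $A$. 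The one fact about $B$ to record at the outset is that it stays path-consistent: all chains of length $n$ share the same quantifier-free $L_\n$-type, so applying basedness to the $L$-formula $\exists x\,\bigwedge_{i<n}\varphi(x,y_i)$ transfers the $\varphi$-consistency of chains in $A$ (which holds since $A$ witnesses $m$-TP) to chains in $B$, and then compactness makes $\{\varphi(x,b_{\mu\uphp n})\mid n<\omega\}$ consistent for every $\mu\in{}^{\omega}\omega$.

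Now the dichotomy. \emph{Case 1}: some finite tuple of siblings in $B$ is $\varphi$-inconsistent, say an $N$-tuple. Since any two $N$-tuples of siblings (at any nodes of $\W$) have the same quantifier-free $L_\n$-type, $\n$-indiscernibility forces \emph{every} $N$-tuple of siblings in $B$ to be $\varphi$-inconsistent; together with path-consistency this is exactly the assertion that $\varphi$ and $B$ witness $N$-TP, and we are done. \emph{Case 2}: for every $N$, the tuple $(b_{\la 0\ra},\dots,b_{\la N-1\ra})$ is $\varphi$-consistent (this is the negation of Case 1, by $\n$-indiscernibility). Here I would use basedness in reverse: applying it to $\exists x\,\bigwedge_{i<N}\varphi(x,y_i)$ and this tuple yields elements $\nu^N_0,\dots,\nu^N_{N-1}$ of $A$ with the same $L_\n$-type as $N$ siblings --- hence $N$ pairwise $\unlhd$-incomparable elements, all on one level, with a common meet $\sigma_N$ strictly below them --- on which $\varphi$ is consistent. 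As $A$ witnesses $m$-TP, for $N\ge m$ these $\nu^N_i$ cannot be actual siblings, so they are genuine \emph{distant} siblings, lying $d_N\ge 2$ levels above $\sigma_N$.

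From this I claim $\varphi$ witnesses $m$-TP$_2$, which contradicts the hypothesis and rules out Case 2. By compactness it suffices, for each $l\ge m$, to produce an $l\times l$ array that is $m$-inconsistent on rows and consistent on every transversal. Fix $l$, put $\mu_i:=\nu^l_i\uphp(\ell(\nu^l_i)-1)$ for $i<l$; since $d_l\ge 2$ these $\mu_i$ are again distant siblings above $\sigma_l$, and each $\nu^l_i$ is a child of $\mu_i$. Choosing distinct $s_0,\dots,s_{l-1}$, set $c^i_j:=a_{\mu_i\fr\la s_j\ra}$. Each row $\{c^i_j\mid j<l\}$ is a set of $l$ distinct children of the single node $\mu_i$, hence $m$-inconsistent by $m$-TP; and for any $f\colon l\to l$ the transversal $\{a_{\mu_i\fr\la s_{f(i)}\ra}\mid i<l\}$ picks out one child of each $\mu_i$, so --- the $\mu_i$ already diverging at $\sigma_l$ --- it has exactly the same quantifier-free $L_\s$-type as $(\nu^l_0,\dots,\nu^l_{l-1})$ (the appended values are immaterial below that divergence), whence $\s$-indiscernibility of $A$ makes it $\varphi$-consistent.

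I expect this array construction to be the main obstacle. The naive attempt --- read rows off the sibling-sets of nodes arranged along one branch --- fails because siblings fan out, so no single branch threads through the sibling-sets of several nodes, and the would-be transversals are not chains and cannot be certified consistent. The fix is to site the rows \emph{above} a shared meet, turning every transversal into a distant-sibling configuration of one fixed coarse tree-shape; $\s$-indiscernibility of $A$ then pins the $\varphi$-type of each transversal to that of the single distant-sibling configuration already known to be consistent. Getting the levels to line up is what forces the passage from $\nu^l_i$ to its predecessor $\mu_i$, and this is exactly where $d_l\ge 2$ is used.
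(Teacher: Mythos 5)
Your proof is correct but takes a genuinely different route from the paper's. Both arguments hinge on the same key array construction---replace each distant sibling $\nu_i$ by its immediate predecessor $\mu_i$, set $c^i_j := a_{\mu_i{}^{\frown}\langle s_j\rangle}$, and use $\s$-indiscernibility of $A$ to identify the $\varphi$-type of every transversal with that of a fixed distant-sibling configuration---but the surrounding logic is packaged quite differently. The paper's proof is constructive: compactness together with the failure of $m$-TP$_2$ produces up front a single $N$ such that every array with $m$-inconsistent rows has some inconsistent length-$N$ transversal; the array argument then shows the $\s$-indiscernible $A$ satisfies ``strong $N$-TP'' ($N$-inconsistency on \emph{all} same-level distant siblings), and since this is an $\n$-type property, Theorem~\ref{main2} is invoked a second time to yield $\n$-indiscernible witnesses. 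Your proof is by contradiction: after Theorem~\ref{main2} gives $\s$-indiscernible $A$, you invoke Theorem~\ref{1-model} to pass to an $\n$-indiscernible $B$ that is $\n$-based on $A$, establish path-consistency of $B$ by basedness, and case-split on whether sibling-tuples in $B$ become $\varphi$-inconsistent. When they do, you are done immediately; when they do not, $\n$-basedness pulls back increasingly long $\varphi$-consistent same-level distant-sibling configurations into $A$, and the array construction manufactures $m$-TP$_2$, contradicting the hypothesis. Your route appeals to $\n$-basedness explicitly and avoids introducing the ad hoc property ``strong $N$-TP''; the paper's route avoids contradiction and gives a uniform $N$ produced in advance. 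Both are valid, and your closing paragraph correctly identifies why the levels must be re-aligned via predecessors ($d_l \geq 2$)---this is precisely the point where $\s$- and $\n$-indiscernibility part ways, since $L_\n$ cannot distinguish siblings from distant siblings.
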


\begin{proof}
Assume that $\varphi (x, y)$ witnesses $m$-TP for some $m<\og$. Then by Theorem \ref{main2}, there exists some $\s$-indiscernible $A = \{a_{\eta} \mid \eta \in \W \}$ witnessing $m$-TP with $\varphi(x,y)$.

\smallskip

\underline{Observation}. Since $\varphi (x, y)$ cannot witness $m$-TP$_2$ by assumption, compactness implies that there exists a number $N < \og$ satisfying the following property:  for any array of parameters $\{ c^i_j \mid i, j<\og \}$ such that $\{ \varphi (x, c^i_j)\mid j<\og \}$ is $m$-inconsistent for every $i<\og$, there exists a function $f \colon N \imp \og$ such that $\{ \varphi (x, c^i_{f(i)}) \mid i< N \}$ is inconsistent.

\smallskip

Let us fix such $N$.  Obviously we can assume $N > m$.

\smallskip

Note that any set $\{ \eta_0, \eta_1, \ldots \} \subseteq \W$ of (at least two) same-level distant siblings induces a unique pair $(s, t)\in \og^2$ where $\ell(\eta_0\wedge \eta_1) = s$ and $\ell(\eta_0) = t$.  Then we shall call the nodes $\eta_0, \eta_1, \ldots$ \textit{$(s,t)$-distant siblings}.

\smallskip

Note that any tuples of $(s,t)$-distant siblings $(\eta_0, \ldots, \eta_k)$ and $(\nu_0, \ldots, \nu_k)$ (where the $\eta_i$'s and $\nu_i$'s are ordered in increasing $\lx$ order, respectively) are $\sim_s$-similar.

\smallskip

Now, let $\{ s < t \} \subseteq \og$ be any $2$-element subset and consider any $(s,t)$-distant siblings $\eta_0, \eta_1, \ldots$ ordered in $\lx$-increasing order.

\ms

\underline{Claim}. $\{ \varphi (x, a_{\eta_i}) \mid i<\og \}$ is $N$-inconsistent.

\smallskip

\textit{Proof of Claim}. The case $s + 1 = t$ is trivial since $\varphi (x, y)$ and $A$ are witnessing $m$-TP and $m<N$. So assume $s + 1 < t$. Let $\nu_i$ denote the immediate $\unlhd$-predecessor of $\eta_i$. i.e., $\nu_i : = \eta_i \uphp (t-1)$. Then consider the array $C = \{ c^i_j \mid i, j<\og \}$ where  $c^i_j : = a_{{\nu_i}^{\frown}\la j \ra}$. Then, for each $i<\og$, $\{ \varphi (x, c^i_j)\mid j<\og \}$ is $m$-inconsistent since $\varphi (x, y)$ is witnessing $m$-TP with $A$. Therefore, by the Observation above, there exists a function $f\colon N \imp \og$ such that $\{ \varphi (x, c^i_{f(i)}) \mid i< N \}$ is inconsistent. Then it follows that  $\{ \varphi (x, a_{\eta_i}) \mid i< \og \}$ is $N$-inconsistent by  the $s$-indiscernibility of $A$. This completes the proof of the Claim.

\ms

Notice that the choice of $(s,t)$ was arbitrary. i.e., the Claim is valid for any arbitrary set of  same-level distant siblings $\{ \eta_0, \eta_1, \ldots \}$. This means that $\varphi (x, y)$ and $A = \{ a_{\eta} \mid \eta\in \W \}$  witness a \emph{strong $N$-TP}. i.e., in addition to witnessing $N$-TP, they also satisfy: $\{ \varphi (x, a_{\eta_i})\mid i<\og \}$ is $N$-inconsistent for \textit{any} set of same-level distant siblings $\{ \eta_0, \eta_1, \ldots \}$. But  strong $N$-TP is clearly a $\n$-type property. Hence, by Theorem \ref{main2}, we may replace $A$ by some $\n$-indiscernible $\{ b_{\eta} \mid \eta\in \W \}$.
\end{proof}

\begin{thm}\label{tp12}
If a formula $\varphi(x, y)$ witnesses $k$-TP ($k\geq 2$), then there exists some
finite conjunction $\psi(x, \yb)=\bigwedge_{i=1}^n \varphi(x, y_i)$ witnessing TP$_1$ or TP$_2$. Hence, a theory has TP iff it has TP$_1$ or TP$_2$.
\end{thm}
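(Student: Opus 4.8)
The plan is to split into two cases according to whether the given formula $\varphi$ itself takes part in a TP$_2$ configuration. If $\varphi$ witnesses $m$-TP$_2$ for some $m\geq 2$, then Proposition \ref{ktp2} at once produces a finite conjunction $\psi(x,\yb)=\bigwedge_{i=1}^{n}\varphi(x,y_i)$ witnessing TP$_2$, and we are done. So assume from now on that $\varphi$ witnesses no $m$-TP$_2$. Since $\varphi$ witnesses $k$-TP, Lemma \ref{252} applies and yields an $N<\omega$ together with $\n$-indiscernible parameters $\{b_\eta\mid\eta\in\W\}$ with which $\varphi$ witnesses $N$-TP; moreover, as the proof of Lemma \ref{252} records, these witnesses in fact satisfy the strong $N$-TP property: $\{\varphi(x,b_{\eta_i})\mid i<\omega\}$ is $N$-inconsistent whenever $\{\eta_i\}$ is a family of same-level distant siblings.

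With genuinely $\n$-indiscernible witnesses to $N$-TP in hand, I would carry out the argument of \citep[{Thm III.7.11}]{sh90}. One dichotomizes on whether there exists an antichain $\langle\eta_i\mid i<\omega\rangle$ of pairwise $\unlhd$-incomparable nodes of $\W$ with $\{\varphi(x,b_{\eta_i})\mid i<\omega\}$ consistent. If so, then after normalizing the antichain — by a Ramsey-and-compactness argument together with $\n$-indiscernibility, so that the relevant finite subtuples all share one $\n$-similarity type — one obtains an array: the rows consisting of $b_\zeta$ for $\zeta$ ranging over the immediate successors of a fixed $\eta_i$ are $N$-inconsistent (these $\zeta$ are genuine siblings, so $N$-TP applies), while each transversal $\{\varphi(x,b_{\zeta_i})\mid i<\omega\}$, one node chosen per row, stays consistent because each of its finite pieces is $\n$-similar to — hence, by $\n$-indiscernibility, has the same consistency as — a finite piece of $\langle\eta_i\rangle$, and compactness finishes. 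This makes $\varphi$ witness $N$-TP$_2$, contradicting our standing assumption; so no such antichain exists. Consequently, by compactness, there is an $M<\omega$ such that every antichain of size $M$ is $M$-inconsistent, i.e., $\varphi$ and $\{b_\eta\}$ witness $M$-TP$_1$. By the equivalence of $M$-TP$_1$ with $2$-TP$_1$ established in \cite{kim10}, some finite conjunction $\psi(x,\yb)=\bigwedge_{i=1}^{n}\varphi(x,y_i)$ witnesses TP$_1$. This proves the first assertion; the stated equivalence then follows, since a formula witnessing TP$_1$ witnesses TP (siblings being $\unlhd$-incomparable) and TP$_2$ $\Rightarrow$ TP is standard.

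The step I expect to be the real work is the extraction of a TP$_2$ array from a consistent antichain: the antichain must be normalized carefully enough that the row families remain $N$-inconsistent while all transversals remain consistent. This is precisely the tree-surgery that Claim \ref{250} performs without justification, and, as Proposition \ref{33} shows, it genuinely fails for merely $\s$-indiscernible witnesses; what rescues it here is that the hypothesis ``$\varphi$ witnesses no $m$-TP$_2$'' feeds into Lemma \ref{252} to supply $\n$-indiscernible witnesses, for which the surgery goes through. The remaining input, the reduction of $M$-TP$_1$ to $2$-TP$_1$ by passing to a finite conjunction, is cited from \cite{kim10}.
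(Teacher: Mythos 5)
Your proposal takes essentially the same route as the paper: the same case split on whether $\varphi$ witnesses $m$-TP$_2$, the same appeals to Proposition~\ref{ktp2} and Lemma~\ref{252} to obtain str-indiscernible witnesses in the no-TP$_2$ case. Where the paper simply defers the remaining combinatorics to Adler's argument, you sketch its content (the antichain dichotomy leading to TP$_2$-extraction or $M$-TP$_1$), but the overall structure and key lemmas are identical.
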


\begin{proof}
Assume that a formula $\varphi (x, y)$ witnesses $k$-TP for some $k \geq 2$. If $\varphi(x, y)$ witnesses $m$-TP$_2$ for some $m$, then Proposition \ref{ktp2} ensures that some finite conjunction $\bigwedge_{i=1}^n \varphi(x, y_i)$ witnesses TP$_2$. On the other hand, assume that $\varphi(x, y)$ does not witness $m$-TP$_2$ for any $m$.  In this case, the argument in  \citep[{Thm 14}]{ad07} gives a clear and detailed proof that some finite conjunction $\bigwedge_{i=1}^n \varphi(x, y_i)$ witnesses TP$_1$, except that it leaves the following fact to be checked by the reader; we may assume that $\varphi(x, y)$ witnesses $k$-TP with some \emph{str-indiscernible} parameters. But this fact is guaranteed by  Lemma \ref{252} above.
\end{proof}

Finally, we show that Claim 1.1 (from the Introduction section) is false by producing a counterexample.

\begin{prop}\label{33}
Claim \ref{250} is false.
\end{prop}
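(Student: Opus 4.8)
The plan is to \emph{refute} Claim \ref{250} by exhibiting one counterexample: a complete theory $T$, a formula $\varphi(x;y)$, an $\s$-indiscernible $A=\{a_\eta\mid\eta\in\W\}$ with which $\varphi$ witnesses $2$-TP, and one $\n$-indiscernible $B=\{b_\eta\mid\eta\in\W\}$ which is $\n$-based on $A$ but with which $\varphi$ witnesses $k'$-TP for no $k'$.

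For the base object I would use (the theory of the ``full'' version of) a two-sorted structure: a tree $\W$ with its tree structure together with the parent function $p$ and the ``last coordinate'' function $\operatorname{lc}$, and a second sort of maps $x$ assigning to each node $\mu$ a value $x(\mu)$; put
\[\varphi(x;\eta)\ :=\ \bigl(\,x(p(\eta))=\operatorname{lc}(\eta)\,\bigr).\]
Taking first the literal tree $A_0=\{\eta\mid\eta\in\W\}$ (so $a_\eta=\eta$) one checks three facts, each holding \emph{uniformly over all tuples of the given $L_\s$-quantifier-free type}: (i) along any $\unlhd$-chain the formulas $\varphi(x;a_{\rho_i})$ are jointly consistent — the parents $p(\rho_i)$ lie on pairwise distinct levels, so a single $x$ can meet all the demands; (ii) any two siblings are $\varphi$-inconsistent — they have the common parent $\mu$, forcing $x(\mu)$ to take two values; (iii) for every $m$ and every family of same-level nodes $\nu_0,\dots,\nu_{m-1}$ all of whose pairwise meets equal a single node $\rho$ with $\ell(\rho)\le\ell(\nu_i)-2$, the $\varphi(x;a_{\nu_i})$ are jointly consistent — the parents $p(\nu_i)$ are pairwise distinct, since the $\nu_i$ already differ at level $\ell(\rho)<\ell(\nu_i)-1$. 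In particular $\varphi$ witnesses $2$-TP with $A_0$.

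Next I would run the $\s$-modeling theorem (Theorem \ref{s-model}) to obtain an $\s$-indiscernible $A$ that is $\s$-based on $A_0$, so $A\models\textrm{EM}_\s(A_0)$ by Remark \ref{emtp}. Because (i)--(iii) are ``uniform over an $L_\s$-quantifier-free type'' statements, the corresponding (in)consistency formulas lie in $\textrm{EM}_\s(A_0)$, so $A$ inherits (i)--(iii); the key point is that the $L_\s$-type of ``$m$ same-level nodes with a common pairwise meet lying at least two levels below'' records only that the meet is strictly lower, not how far. Hence $\varphi$ still witnesses $2$-TP with the $\s$-indiscernible $A$ (branch condition from (i) plus saturation, sibling inconsistency from (ii)). Now fix the ``spreading'' map $f\colon\W\to\W$, $f(\langle n_0,\dots,n_{l-1}\rangle):=\langle n_0,0,n_1,0,\dots,n_{l-1},0\rangle$. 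It is an $L_\n$-embedding preserving (and reflecting) $\unlhd,\len,\wedge,\lx$ — it does not preserve levels, which is harmless since $P_\alpha\notin L_\n$ — so it preserves quantifier-free $L_\n$-types; and it carries any sibling family $\{\mu^\frown\langle j_i\rangle\}_i$ to $\{f(\mu)^\frown\langle j_i,0\rangle\}_i$, a set of same-level nodes whose pairwise meets all equal $f(\mu)$, which lies exactly two levels lower. Put $b_\eta:=a_{f(\eta)}$ and $B:=\{b_\eta\mid\eta\in\W\}$.

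It then remains to verify: $B$ is $\n$-indiscernible (as $f$ preserves $L_\n$-types and $A$ is $\n$-indiscernible); $B$ is $\n$-based on $A$ (given any formula and tuple $\bar\eta$, the witness $f(\bar\eta)$ has the same $L_\n$-type as $\bar\eta$ and $b_{\bar\eta}=a_{f(\bar\eta)}$); and $\varphi$ witnesses no $k'$-TP with $B$ — the branch condition holds because each $(f(\mu\uphp n))_n$ is a $\unlhd$-chain so (i) applies and one concludes by saturation, while the sibling condition fails for \emph{every} $k'$ because $k'$ siblings in the index of $B$ go under $f$ to a configuration in the index of $A$ covered by (iii), whence $\{\varphi(x;b_{\eta_i})\}_{i<k'}$ is consistent. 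This contradicts Claim \ref{250}. I expect the main obstacle to be exactly this transfer step in the third paragraph: arranging the base theory so that (i)--(iii) are properties of $L_\s$-quantifier-free types and hence survive to $A$, since that is precisely where the gap between $\s$- and $\n$-indiscernibility is exploited; checking that the base theory is a genuine complete theory with a saturated model realizing all the stated consistency assertions is routine once $T$ is taken to be the theory of the full labelling structure.
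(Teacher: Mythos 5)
There is a genuine gap at the point where you conclude that $B:=\{a_{f(\eta)}\mid\eta\in\W\}$ is $\n$-indiscernible. Your parenthetical justification reads ``as $f$ preserves $L_\n$-types and $A$ is $\n$-indiscernible,'' but $A$ was produced by the $\s$-modeling theorem and is only $\s$-indiscernible; its $\n$-indiscernibility was never established, and in general it fails, since $\s$-indiscernibility allows the type of $a_\eta$ to vary with the level of $\eta$. Concretely, the singletons $(\langle 0\rangle)$ and $(\langle 0,0\rangle)$ have the same quantifier-free $L_\n$-type, but their images under $f$ lie at levels $2$ and $4$, which are distinguished by $P_2, P_4 \in L_\s$, so nothing forces $a_{f(\langle 0\rangle)}\equiv a_{f(\langle 0,0\rangle)}$. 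In your ``literal tree'' base structure this failure is real, because the level of a node is definable and survives the $\s$-modeling step. Thus the $B$ you construct is $\n$-\emph{based} on $A$ (that part is verified correctly) and satisfies the desired consistency properties, but it is not itself $\n$-indiscernible, so it is not yet a counterexample to Claim \ref{250}.

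The missing step is one further application of the $\n$-modeling theorem (Theorem \ref{1-model}) after the spreading: let $B'$ be $\n$-indiscernible and $\n$-based on $B$; by transitivity $B'$ is $\n$-based on $A$. To see that $B'$ still fails every $k'$-TP, note that every tuple in the $B$-index whose $L_\n$-qftp is that of a set of same-level distant siblings is carried by $f$ to a configuration in the $A$-index covered by your fact (iii), so the corresponding instances of $\varphi$ are jointly consistent, and this holds \emph{uniformly} over the $L_\n$-qftp; hence it belongs to $\textrm{EM}_\n(B)$ and passes to $B'$. Since $L_\n$ cannot distinguish siblings from same-level distant siblings (the predicates $P_\alpha$ are absent), this suffices, which is exactly what the paper packages as ``strong $\varphi$-consistency'' being an $\n$-type property. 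Apart from the missing modeling step your route mirrors the paper's --- your $f$ is literally the paper's $h$ --- the main genuine difference being the base theory: you propose a bespoke two-sorted labelling structure where the paper uses $T^\ast_{\textrm{feq}}$; either should work, but you still owe a check that your theory exists, is complete, and has a suitably saturated model. Also note the remark that the $L_\s$-qftp ``records only that the meet is strictly lower, not how far'' is incorrect ($L_\s$ contains the $P_\alpha$, so it records exact levels); this misstatement is harmless here because each specific $L_\s$-qftp of a ``same-level, meet two or more below'' tuple is handled separately by $\textrm{EM}_\s(A_0)$.
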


\begin{proof}  To produce a counterexample, we consider the theory $T^\ast_{\textrm{feq}}$ of infinitely many independent parameterized equivalence relations $x \sim_z y$.  (See \cite{sh93}.)  A model of this theory is a two-sorted structure in sorts $Q$ and $P$, where  each $c\in Q$ gives an equivalence relation $\sim_c$ on $P$ consisting of infinitely many equivalence classes, in such a way that the following property holds: for any finitely many, distinct elements $c_1, \cdd, c_k\in Q$, and for any $b_1, \cdd, b_k\in P$, there exists some $a\in P$ such that $a\sim_{c_i} b_i$ for each $i$.

\smallskip

Now, choose any set $\{ c_{\nu} \mid \nu\in \W \}$ where the $c_\nu$'s are distinct elements of  $Q$. Also choose any $c\in Q$ distinct from all the $c_{\nu}$'s.  For each $\nu\in \W$, choose any sequence $( d_{\nu^{\smallfrown}\la i \ra})_{i<\og}$ of elements in $P$  such that $i\neq j$ implies  $d_{\nu^{\smallfrown}\la i \ra} \nsim_{c_\nu} d_{\nu^{\smallfrown}\la j \ra}$. Also let $d_{\la \ra}$ be any arbitrary element of $P$. Then define $a_{\nu^\smallfrown \la i \ra}:= (d_{\nu^\smallfrown\la i \ra} , c_\nu)$ for each $\nu\in \W$ and each $i<\og$, and also define $a_{\la \ra}: = (d_{\la \ra}, c)$.

\smallskip

Now, let $\varphi(x; y, z)$ be the formula $x\sim_{z}y$. Then it  follows that $\{ \varphi (x; a_{\eta}) \mid \eta\in \W \}$  witnesses $2$-TP. Moreover, for any finitely many, distinct elements $\eta_1, \cdd, \eta_k\in \W$ where no two $\eta_i$'s are siblings with each other, the conjunction $\bigwedge_{i=1}^k \varphi(x; a_{\eta_i})$ is consistent. Note that $2$-TP together with this property is an $s$-type property. Hence, by Theorem \ref{main2}, we may assume $\{ a_\eta\mid \eta\in \W \}$ is $s$-indiscernible.

\smallskip

To complete the proof, it remains to find some str-indiscernible $\{ b_\eta \mid \eta\in \W \}$ that is str-based on $\{ a_\eta\mid \eta\in \W \}$ such that $\{ \varphi(x, b_\eta) \mid \eta\in \W \}$ does not witness $k$-TP for any $k$. The idea is first to take a subtree $\{ a'_\eta\mid \eta\in \W \}$ of $\{ a_\eta\mid \eta\in \W \}$ as follows: define a function $h\colon \W \rightarrow \W$ by letting $h(\la \ra):= \la \ra$ and  $h(\eta^{\fr}\la i \ra) =  h(\eta)^{\fr}\la i \ra^{\fr}\la 0 \ra$ for all $\eta\in \W$ and all $i<\og$. Then define $a'_{\eta}: = a_{h(\eta)}$. Note that $\{ a'_\eta\mid \eta\in \W \}$ is str-based on $\{ a_\eta\mid \eta\in \W \}$. Moreover, note that $\{ \varphi(x, a'_\eta)\mid \eta\in \W \}$ satisfies the following `strong $\varphi$-consistency':  for any finitely many $\eta_1, \cdd, \eta_k\in \W$, the conjunction $\bigwedge_{i=1}^k \varphi(x, a'_{\eta_i})$ is consistent. Now, by the str-modeling theorem (Theorem \ref{1-model}), there exists some str-indiscernible $\{ b_\eta\mid \eta\in \W \}$ that is str-based on $\{ a'_\eta\mid \eta\in \W \}$. Then  $\{ b_\eta \mid \eta\in \W \}$ still has the strong $\varphi$-consistency. In particular, $\{ \varphi(x, b_\eta)\mid \eta\in \W \}$ does not witness $k$-TP for any $k$. Finally we note that $\{ b_\eta \mid \eta\in \W \}$ is str-based on $\{ a_\eta\mid \eta\in \W \}$ since  str-basedness is a transitive property. Hence such $\{ b_\eta \mid \eta\in \W \}$ is a counterexample to Claim \ref{250}.
\end{proof}

\section{Appendix}

Here we reproduce proofs of the combinatorial lemmas referenced in \citep[{Thm III.7.11}]{sh90}.  They are  more involved than the proof given for the $\s$-modeling theorem, but perhaps a detailed exposition will find future application.  See \cite{grsh86} for a different approach to Theorem \ref{1}.

We use the following notation: For any tuple $\etb = (\eta_l)_{l<m}$ from $\leftexp{\og>}{\lambda}$ and any $n<\og$, $\bar\eta \uphp n :=(\eta'_l)_{l<m}$ where
\[ \eta'_l: =
\begin{cases}
\eta_l & \text{if $\ell(\eta_l)\leq n$}\\
\eta_l\uphp n & \text{if $\ell(\eta_l)> n$}
\end{cases}
\]

\begin{lem}\label{qtfl}
For any tuples $\etb, \nub$ from $\leftexp{\omega>}{\lambda}$ and any $n<\og$, if\, $\etb \sim_{\textrm{s}} \nub$ then\, $\etb\uphp n \sim_{\textrm{s}} \nub\uphp n$.
\end{lem}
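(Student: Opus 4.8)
The plan is to show that the truth value at $\etb \uphp n$ of every atomic $L_{\s}$-formula is already determined by $\qt^{L_{\s}}(\etb; \leftexp{\og >}{\lambda})$; since $\etb \sim_{\s} \nub$ says precisely that $\qt^{L_{\s}}(\etb) = \qt^{L_{\s}}(\nub)$ (both taken in $\leftexp{\og >}{\lambda}$), this yields $\qt^{L_{\s}}(\etb \uphp n) = \qt^{L_{\s}}(\nub \uphp n)$, i.e.\ $\etb \uphp n \sim_{\s} \nub \uphp n$, because a quantifier-free type is determined by the truth values of atomic formulas. The crux is the identity
\[
(\sigma \wedge \tau) \uphp n \;=\; (\sigma \uphp n) \wedge (\tau \uphp n), \qquad \sigma, \tau \in \leftexp{\og >}{\lambda},\ n < \og,
\]
which I would prove first by a short case analysis according to whether $\ell(\sigma \wedge \tau) \geq n$ (then all three nodes share the same length-$n$ initial segment) or $\ell(\sigma \wedge \tau) < n$ (then $\sigma \uphp n$ and $\tau \uphp n$ still meet at $\sigma \wedge \tau$, which is unchanged by the truncation). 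Iterating, every $L_{\s}$-term — an iterated meet of variables, which collapses to $\bigwedge_{i \in I} x_i$ for a finite nonempty $I$ — is evaluated at $\etb \uphp n$ by $\bigl(\bigwedge_{i \in I}\eta_i\bigr) \uphp n$. One also uses repeatedly that $\ell(\theta \uphp n) = \min(\ell(\theta), n)$ for any node $\theta$, so that the level of a term evaluated at $\etb \uphp n$ equals $\min(n, \cdot)$ of its level at $\etb$, the latter being read off from $\qt^{L_{\s}}(\etb)$ through the predicates $(P_m)_{m < \og}$.

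It then remains to run through the atomic $L_{\s}$-formulas. By Remark \ref{reduct}, $\unlhd$ is a definable abbreviation ($\sigma \unlhd \tau \Leftrightarrow \sigma \wedge \tau = \sigma$), so it suffices to handle $t_1 = t_2$, $t_1 \lx t_2$, and $P_m(t_1)$ for meet-terms $t_1, t_2$. Writing $\sigma, \tau, \rho$ for the values at $\etb$ of $t_1$, $t_2$, $t_1 \wedge t_2$, the lengths $\ell(\sigma), \ell(\tau), \ell(\rho)$ and the relations $\unlhd, \lx$ among $\sigma, \tau, \rho$ at $\etb$ are all recorded in $\qt^{L_{\s}}(\etb)$. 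Putting $p = \min(\ell(\sigma), n)$, $q = \min(\ell(\tau), n)$, $r = \min(\ell(\rho), n)$ — the lengths of $\sigma \uphp n$, $\tau \uphp n$, and of their meet — a direct check using the identity above gives: $P_m(t_1)$ holds at $\etb \uphp n$ iff $m = p$; $\sigma \uphp n = \tau \uphp n$ iff $p = q = r$; and $\sigma \uphp n \lx \tau \uphp n$ iff either $r = p < q$, or else $r < p$, $r < q$, and $\sigma \lx \tau$ holds at $\etb$. In each case the right-hand side is a function of $\qt^{L_{\s}}(\etb)$ alone, which completes the argument.

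The hard part is precisely this last bookkeeping for $\lx$ (and, to a lesser extent, for $=$): truncation can identify nodes that were distinct — $\unlhd$-comparable ones, or incomparable ones whose splitting level is $\geq n$ — and whether this happens, and how it then affects the lexicographic comparison, must be decided from the quantifier-free $L_{\s}$-type of the \emph{original} tuple. This is exactly where the level predicates $(P_\alpha)_{\alpha < \og}$ of $L_{\s}$ are indispensable; for the reduct language $L_{\n}$, which keeps only the coarser relation $\len$, the analogue of the lemma fails, which is why it is stated for $\sim_{\s}$. Alternatively, one could first pass to the meet-closures of $\etb$ and $\nub$ via Remark \ref{251} — these stay meet-closed after truncation by the identity above — and then exhibit an explicit $L_{\s}$-isomorphism $\etb \uphp n \to \nub \uphp n$; but this merely repackages the computation just sketched.
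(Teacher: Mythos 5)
Your proposal is correct, and its substance matches the paper's argument: both hinge on the observation that truncation commutes with meets, $(\sigma \wedge \tau) \uphp n = (\sigma \uphp n) \wedge (\tau \uphp n)$, together with the fact that the predicates $(P_\alpha)$ in $L_{\s}$ pin down exact levels, so that $\ell(\theta \uphp n) = \min(\ell(\theta),n)$ is readable from the $\s$-type of the original tuple. The difference is purely presentational. The paper passes to meet-closures (via Remark \ref{251}), extends the resulting $L_{\s}$-isomorphism $f:\eta_l \mapsto \nu_l$ to the downward $\unlhd$-closures $\etb', \nub'$ (this extension is forced, since levels are preserved), declares it ``routine to check'' that the extension is again an $L_{\s}$-isomorphism, and then restricts to $\etb\uphp n, \nub\uphp n \subseteq \etb', \nub'$. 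Your proof carries out exactly that ``routine check'' in the form of a direct computation of how the truth of each atomic $L_{\s}$-formula (equality, $\lx$, $P_m$, with $\unlhd$ reduced to $\wedge$ and $=$) at $\etb\uphp n$ is determined by $\qt^{L_{\s}}(\etb)$; your case analysis for $\lx$ is correct (in particular, $r<p$ and $r<q$ force $\ell(\rho)<n$, so the disagreement position survives truncation). You also correctly identify, as you note in your closing remark, that the isomorphism route is a repackaging of the same computation, and you rightly flag that the analogue fails for $L_{\n}$ since $\len$ alone does not recover levels. No gap.
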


\begin{proof}  By Remark \ref{251}, we may assume that the tuples $\etb, \nub$ are meet-closed, and therefore the mapping $f : \eta_l\mapsto \nu_l$ is an $L_\s$-isomorphism.  Let $\etb', \nub'$ be the closure downwards under $\unlhd$ in $\leftexp{\omega>}{\lambda}$ of $\etb, \nub$, respectively, enumerated in the same way according to the enumerations of $\etb, \nub$.  It is routine to check that the natural extension of $f$ to $\etb'$ is also an $L_\s$-isomorphism mapping onto $\nub'$, because the predicates $(P_\alpha)$ preserve levels.  Thus, by restricting the domain, $f$ guarantees that $\qt^{L_\s}(\etb \uphp n; \leftexp{\omega>}{\lambda}) = \qt^{L_\s}(\nub \uphp n; \leftexp{\omega>}{\lambda})$, and so $\etb \uphp n \sim_{\s} \nub \uphp n$.
\end{proof}

In the following definitions, we shall use the  terminology introduced in Section \ref{51} (Notation and Conventions), namely a subtree $\I \sbb {}^{n\geq}\lam$ ``occupying all levels $\leq n$".

\begin{dfn}\label{RamseyPair}
\be
\item Given a pair $(n, m)$ of positive integers and an infinite cardinal $\chi$, a  cardinal $\lam$ is said to be \emph{$(n, m)$-sufficient for $\chi$} if  for every map
\[ f\colon ({}^{n\geq}\lam)^m \imp \chi
\]
there exists a $\chi^+$-branching subtree $\I \sbb {}^{n\geq}\lam$ occupying all levels $\leq n$ such that $\I$ is  $\sim_\s$-homogeneous for $f$ (i.e., any $\sim_\s$-equivalent $m$-tuples in $\I$ are sent to the same image by $f$).

\ssk

\item A pair $(n, m)$ of positive integers is called a \emph{Ramsey pair} if there exists some $k<\og$ such that, for any infinite cardinal $\chi$, $\beth_k(\chi)^+$ is $(n, m)$-sufficient for $\chi$. Let $\mathcal{R}(n, m)$ denote the smallest such $k$ for a Ramsey pair $(n, m)$.
\ee
\end{dfn}

\begin{dfn} \label{plrel}
Suppose $\bigcup_{i<\gam} X_i$ is a disjoint union of $L_\s$-structures where $\gam$ is an ordinal. In any finite Cartesian product $(\bigcup_{i<\gamma}X_i)^n$, define an equivalence relation $\pl_n$ (or simply $\pl$) as follows: $(\eta_1, \cdd, \eta_n) \pl (\nu_1, \cdd, \nu_n)$ iff for all indices $i$ and $k$,
\be
\item $\eta_i \in X_k \Leftrightarrow \nu_i \in X_k$

\smallskip

\item for any finite sequence $\eta_{i_0}, \ldots, \eta_{i_t}$  in $X_k$, $(\eta_{i_0}, \ldots, \eta_{i_t}) \sim_{\s} (\nu_{i_0}, \ldots, \nu_{i_t})$
\ee
\end{dfn}

\begin{dfn}
\be
\item Given a pair $(n, m)$ of positive integers and an infinite cardinal $\chi$, a cardinal $\lam$ is said to be \emph{strongly $(n, m)$-sufficient for $\chi$} if the following holds: given any disjoint union $\bigcup_{i<\gam} X_i$ of  $L_\s$-structures where $\gam$ is an ordinal with $\| \gam \| \leq \chi$ and each $X_i$ is $L_\s$-isomorphic to ${}^{n\geq}\lam$, for every function
\[ f\colon (\bigcup_{i<\gam}X_i)^m \imp \chi
\]
there exists a $\chi^+$-branching subtree $Y_i\sbb X_i$ (for each $i<\gam$) occupying all levels $\leq n$ such that  $\bigcup_{i<\gam}Y_i$ is $\pl$-homogeneous for $f$ (i.e., any $\pl$-equivalent $m$-tuples in $\bigcup_{i<\gam}Y_i$ are sent to the same image by $f$.)

\ssk

\item A pair $(n, m)$ of positive integers is called a \emph{strong Ramsey pair} if there exists some $k<\og$ such that, for any infinite cardinal $\chi$, $\beth_k(\chi)^+$ is strongly $(n, m)$-sufficient for $\chi$. Let $\mathcal{R'}(n, m)$ denote the smallest such $k$ for a strong Ramsey pair $(n, m)$.
\ee
\end{dfn}

\begin{rmk}
Clearly, if $\lam$ is strongly $(n, m)$-sufficient for $\chi$ then $\lam$ is $(n, m)$-sufficient for $\chi$. Hence, every strong Ramsey pair is a Ramsey pair.
\end{rmk}

The following argument is a modification of \citep[{App. 2.7}]{sh90}.

\begin{thm}\label{4}
Every Ramsey pair is a strong Ramsey pair.
\end{thm}

\begin{proof}
Let $(n, m)$ be any Ramsey pair and let $k:=\mathcal{R}(n, m)$.

\ssk

We will show  $\mathcal{R}'(n, m) \leq m(k+2)$.

\ssk

So let $\chi$ be any infinite cardinal and let $\lam: = \beth_{m(k+2)}(\chi)^+$.

\ssk

And let $\bigcup_{i<\gam}X_i$ be any disjoint union of $L_\s$-structures where $\gam$ is an ordinal with $\| \gam \| =  \chi$ and each $X_i$ is $L_\s$-isomorphic to ${}^{n\geq}\lam$.

\ssk

And fix any function
\[ f\colon (\bigcup_{i<\gam} X_i)^m \imp \chi
\]

\ms

Before we begin our proof, we need to introduce some notation.

\ssk

\be
\item For any element $\eta\in \bigcup_{i<\gam}X_i$,  $\pi(\eta)$ denotes the ordinal $i$ such that $\eta\in X_i$.

\ssk

\item Given any $m$-tuple $\etb:= (\eta_1, \cdd, \eta_m)$ in  $\bigcup_{i<\gam}X_i$ and any ordinal $\alpha$,
\[ E(\etb): = \{ \pi(\eta_1), \cdd, \pi(\eta_m)  \} \quad \mbox{and} \quad N_\alpha(\etb): = \| \{ i\in E(\etb) \mid i\geq \alpha \}\|
\]

\ssk

\item Given any $m$-tuples $(\eta_1, \cdd, \eta_m)$ and $(\nu_1, \cdd, \nu_m)$ in $\bigcup_{i<\gam}X_i$ and any ordinal $\alpha$ and any integer $t$,

\ssk

\be
\item $\etb \approx_\alpha \nub$ means:
\be
\item $\etb \pl \nub$
\item $\al \in E(\etb)$
\item if $\pi(\eta_i)\neq \al$ then $\eta_i = \nu_i$ (for all $i$).
\ee

\ms

\item $\etb \approx_{(\al, t)} \nub$ means:

\ssk

\be
\item $\etb \pl \nub$
\item $\al \in E(\etb)$
\item if $\pi(\eta_i)<\al$ then $\eta_i = \nu_i$ (for all $i$).
\item $N_\al(\etb) \leq t$
\ee
\ee

\ms

\item A subset  $\bigcup_{i<\gam} Y_i \sbb \bigcup_{i<\gam} X_i$ (where $Y_i\sbb X_i$) is called \textit{$t$-homogeneous for $f$} if, for all $i < \gam$, any $\approx_{(i, t)}$-equivalent $m$-tuples in $\bigcup_{i<\gam} Y_i$ are mapped to  the same image by $f$.
\ee

\ms

Observe that, for any subset $\bigcup_{i<\gam} Y_i \sbb \bigcup_{i<\gam} X_i$ (where $Y_i \sbb X_i$)
\be
\item $\bigcup_{i<\gam} Y_i$ is $0$-homogeneous.

\ssk

\item $\bigcup_{i<\gam} Y_i$ is $m$-homogeneous iff it is $\pl$-homogeneous.
\ee

\ms

Now, we are ready to start our proof.

\ssk

First consider the case $m=1$. Then, since $(n, m)$ is a Ramsey pair and $m(k+2)>k=\mathcal{R}(n, m)$, we readily obtain a $\chi^+$-branching subtree $Y_i\sbb X_i$ (for each $i<\gam$) occupying all levels $\leq n$ such that $Y_i$ is $\sim_\s$-homogeneous for $f$. Then it's clear that $\bigcup_{i<\gam} Y_i$ is $\pl$-homogeneous for $f$, as desired.

\ms

Next, assume $m>1$.

\ssk

Let $X^0_i: = X_i$ for each $i<\gam$. We will inductively define a descending chain
\[ \bigcup_{i<\gam}X^0_i \ \supseteq\ \bigcup_{i<\gam}X^1_i\ \supseteq\ \cdd\ \supseteq\ \bigcup_{i<\gam}X^m_i
\]

where
\be
\item $X^{j}_i$ is a $\beth_{(m-j)(k+2)}(\chi)^{+}$-branching subtree of $X^{j-1}_i$ occupying all levels $\leq n$ (for all $i<\gam$ and $0 < j \leq m$)

\ssk

\item $\bigcup_{i<\gam}X^j_i$ is $j$-homogeneous (for all $0\leq j\leq m$).
\ee

\bs

We use induction: let $0\leq q<m$ and assume that we have found a $q$-homogeneous $\bigcup_{i<\gam}X^q_i$  where each $X^q_i$ is a $\beth_{(m-q)(k+2)}(\chi)^+$-branching subtree of $X_i$ occupying all levels $\leq n$.

\ssk

We start a second induction: let $\al$ be any ordinal $<\gam$ and assume that we have defined $\{ X^{q+1}_i \mid i<\al \}$ such that
\be
\item each $X^{q+1}_i$ is a $\beth_{(m-(q+1))(k+2)}(\chi)^+$-branching subtree of $X^q_i$ occupying all levels $\leq n$,

\smallskip

\item for each $\beta<\al$, any $\approx_{(\beta, q+1)}$-equivalent $m$-tuples in
\[ \bigcup_{i\leq \beta}X^{q+1}_i \, \cup \bigcup_{\beta<i<\gam}X^q_i
\]
are mapped to the same image by $f$.
\ee

\ssk

We will find  a $\beth_{(m-(q+1))(k+2)}(\chi)^+$-branching subtree $X^{q+1}_\al\sbb X^q_\al$ occupying all levels $\leq n$ such that any $\approx_{(\al, q+1)}$-equivalent $m$-tuples in
\[ \bigcup_{i\leq \al}X^{q+1}_i \, \cup \bigcup_{\al<i<\gam}X^q_i
\]
are mapped to the same image by $f$. (This will complete the second induction, giving us a  $(q+1)$-homogeneous $\bigcup_{i<\gam} X^{q+1}_i$ which will in turn complete the first induction and indeed the entire proof of the theorem.)

\ssk

We will find such $X^{q+1}_\al$ via two claims below. But first, let us choose an arbitrary family of subtrees
\[ \{ S_i \sbb X^q_i \mid \al < i <\gam \}
\]
where each $S_i$ is a $\aleph_0$-branching subtree of $X^q_i$ occupying all levels $\leq n$.

\ms

\nin \underline{Claim 1}. There exists a $\beth_{(m-(q+1))(k+2)}(\chi)^+$-branching subtree $Z\sbb X^q_\al$ occupying all levels $\leq n$  such that any $\approx_\al$-equivalent $m$-tuples in
\[ \bigcup_{i < \al}X^{q+1}_i \, \cup\, Z\, \cup \bigcup_{\al<i<\chi}S_i
\]
are mapped to the same image by $f$.

\ms

\textit{Proof of Claim 1}. Let $\mathcal{A}:= \bigcup_{i < \al}X^{q+1}_i \, \cup\, \bigcup_{\al<i<\chi}S_i$. Note $\| \mathcal{A} \| \leq \beth_{(m-(q+1))(k+2)}(\chi)^+$.

\ssk

For any subset $B\sbb m$,  any $m$-tuple $\etb:=(\eta_0, \cdd, \eta_{m-1})$ in $X^q_\al$ and any $m$-tuple $\nub:=(\nu_0, \cdd, \nu_{m-1})$ in $\mathcal{A}$, define an $m$-tuple $\etb +_B \nub: = (\epsilon_0, \cdd, \epsilon_{m-1})$ as
\[ \epsilon_i: =
\begin{cases}
\eta_i &\text{if $i\in B$}\\
\nu_i &\text{otherwise}
\end{cases}
\]

\ms

Then, for any subset $B\sbb m$ and any $m$-tuple $\etb$ in $X^q_\al$, we can define a function
\[\Psi_B(\etb)\colon \mathcal{A}^m \imp \chi
\]
by letting $\nub \mapsto f(\etb+_B \nub)$ for each $\nub\in \mathcal{A}^m$.

\ssk

Furthermore, let $\Psi$ be the function with domain  $(X^q_\al)^m$, sending each $\etb\in (X^q_\al)^m$ to the finite string of functions $\la \Psi_B(\etb)\mid B\sbb m \ra$.

\ssk

Now, the fact $\| \mathcal{A} \| \leq \beth_{(m-(q+1))(k+2)}(\chi)^+$ implies that the range of $\Psi$ has a size at most $\tau:=\beth_{(m-(q+1))(k+2)+2}(\chi)$. Moreover, observe:
\[ \beth_{(m-q)(k+2)}(\chi) = \beth_k(\tau)
\]

So $X^q_\al$ is a $\beth_k(\tau)^+$-branching tree occupying all levels $\leq n$, and $\Psi$ can be viewed as a map $(X^q_\al)^m \imp \tau$. But  $(n, m)$ is a Ramsey pair and $k = \mathcal{R}(n, m)$. Hence, there exists a $\tau^+$-branching  subtree $Z\sbb X^q_\al$ occupying all levels $\leq n$ which is $\sim_\s$-homogeneous for $\Psi$. Then it's clear that
\[ \bigcup_{i < \al}X^{q+1}_i \, \cup\, Z\, \cup \bigcup_{\al<i<\chi}S_i
\]
is $\approx_\alpha$-homogeneous for $f$, as desired. Finally, since $\beth_{(m-(q+1))(k+2)}(\chi)^+ < \tau^+$, we may as well assume that $Z$ is $\beth_{(m-(q+1))(k+2)}(\chi)^+$-branching. This completes the proof of Claim 1.

\bs

\nin \underline{Claim 2}. Let $Z$ be as in Claim 1. Then any $\approx_{(\al, q+1)}$-equivalent $m$-tuples in
\[ \bigcup_{i < \al}X^{q+1}_i \, \cup\, Z\, \cup \bigcup_{\al<i<\gam}X^q_i
\]
are mapped to the same image by $f$.

\ms

\nin \textit{Proof of Claim 2}. Let $\etb:=(\eta_1, \cdd, \eta_m)$ and $\nub:=(\nu_1, \cdd, \nu_m)$ be any $\approx_{(\al, q+1)}$-equivalent $m$-tuples in the said set. If $N_\al(\etb) \leq q$ then the $q$-homogeneity of $\bigcup_{i<\gam}X^q_i$ ensures  $f(\etb) = f(\nub)$. So we may assume $N_\al(\etb) = q+1$.

\ssk

We define $m$-tuples $(\eta^*_1, \cdd, \eta^*_m)$ and $(\nu^*_1, \cdd, \nu^*_m)$ as follows: First, for each $i$ such that $\pi(\eta_i)\leq \alpha$, define $\eta^*_i:=\eta_i$ and $\nu^*_i:=\nu_i$.

\ssk

Next, let $\beta_1, \cdd, \beta_q$ list those ordinals in $E(\etb)$ which are strictly greater than $\al$. For each of these $\beta_i$'s,  perform the following operation. (For clarity, we will only describe the operation for $\beta_1$.)

\ssk

Let $\eta_{i_1}, \cdd, \eta_{i_t}$ and $\nu_{i_1}, \cdd, \nu_{i_t}$ list those $\eta_i$'s and $\nu_i$'s in $X^q_{\beta_1}$. In particular,
\[ (\eta_{i_1}, \cdd, \eta_{i_t}) \sim_\s (\nu_{i_1}, \cdd, \nu_{i_t})
\]
Moreover, it is clear that we can find  some tuple $(\epsilon_{i_1}, \cdd, \epsilon_{i_t})$ in $S_{\beta_1}$ such that \[(\eta_{i_1}, \cdd, \eta_{i_t}) \sim_\s (\epsilon_{i_1}, \cdd, \epsilon_{i_t}) \sim_\s (\nu_{i_1}, \cdd, \nu_{i_t})
\]

Define $\eta^*_{i_j}:=\epsilon_{i_j}$ and $\nu^*_{i_j}:=\epsilon_{i_j}$ for each $j=1, \cdd, t$.

\ms

After performing this operation for each of $\beta_1, \cdd, \beta_q$, we  obtain $m$-tuples  $\etb^*$ and $\nub^*$ in
\[  \bigcup_{i < \al}X^{q+1}_i \, \cup\, Z\, \cup \bigcup_{\al<i<\gam}S_i
\]
such that
\be
\item $\etb \approx_{(\beta_1, q)} \etb^*$ and $\nub \approx_{(\beta_1, q)} \nub^*$
\ssk

\item $\etb^* \approx_\alpha \nub^*$.
\ee

\ssk

Then the $q$-homogeneity of $\bigcup_{i<\gam}X^q_i$ ensures $f(\etb) = f(\etb^*)$ and $f(\nub) = f(\nub^*)$. Moreover, Claim 1 ensures that
\[ \bigcup_{i < \al}X^{q+1}_i \, \cup\, Z\, \cup \bigcup_{\al<i<\gam}S_i
\]
is $\approx_\al$-homogeneous for $f$, and so  $f(\etb^*) = f(\nub^*)$. So $f(\etb) = f(\nub)$. This completes the proof of Claim 2.

\ms

Hence, letting $X^{q+1}_\al:=Z$, we complete the second induction and therefore the proof of this theorem.
\end{proof}

\begin{thm} [{\citep[{App. 2.6}]{sh90}}]\label{1}
Every pair $(n, m)$ of positive integers is a Ramsey pair.
\end{thm}

\begin{proof}\hfill

\ssk

\nin \underline{Case 1} $(m=1)$. We will show $\mathcal{R}(n, m) = 0$ for every $n\geq 1$.

\ssk

Fix any integer $n\geq 1$ and any infinite cardinal $\chi$.

\ssk

Let $\lam:=  \chi^+ (=\beth_0(\chi)^+)$ and fix any function $f\colon {}^{n\geq}\lambda \rightarrow \chi$.

\ssk

Let $K_n:= {}^{n\geq}\lam$. We will inductively define a descending chain of trees
\[ K_n \supseteq K_{n-1} \supseteq \cdd \supseteq K_0
\]
where each $K_i$ is a $\chi^+$-branching tree occupying all levels $\leq n$, and satisfies the following `$i$-th height homogeneity for $f$': for any $\eta, \nu\in K_i$, if  $\ell(\eta) = \ell(\nu)$ and $\ell(\eta\wedge\nu)\geq i$ then $f(\eta) = f(\nu)$.

\ms

Observe:
\be
\item $K_n$ is $n$-th height homogeneous for $f$.
\item $0$-th height homogeneity for $f$  $\, \Leftrightarrow\  $ $\sim_\s$-homogeneity for $f$  \, (assuming $m=1$).
\ee

\ms

Hence, finding such $K_0$ would finish the proof for this case.

\ssk

We use induction: let $0< q\leq n$ and assume that we have defined a $\chi^+$-branching subtree $K_q\sbb K_n$ occupying all levels $\leq n$ which is $q$-th height homogeneous for $f$.

\ms

For each $\eta\in P_q(K_q)$,  define a function
\[ \Phi(\eta)\colon \{ i \mid q\leq i\leq n \} \imp \chi
\]
by  $i \mapsto f(\nu)$ where $\nu$ is any element in $P_i(K_q)$ such that  $\eta \unlhd \nu$. Note that the $q$-th height homogeneity of $K_q$ ensures that such a map is well-defined.

\ssk

Now, for each $\eta\in P_{q-1}(K_q)$, define:
\be
\item  $Y(\eta):= \{ \al <\lam \mid \eta^{\smallfrown}\la \alpha \ra \in K_q \}$. So $\| Y(\eta) \| = \chi^+$.
\ssk

\item $\Psi(\eta)$ is the function with domain $Y(\eta)$, sending each $\alpha \in Y(\eta)$ to $\Phi(\eta^{\smallfrown}\la \alpha \ra)$.
\ee

\ssk

Note that, for each $\eta\in P_{q-1}(K_q)$, the range of $\Psi(\eta)$ has a size at most $\chi$. Hence, by the pigeon hole principle, there exists a size-$\chi^+$ subset $Y'(\eta)\sbb Y(\eta)$ on which $\Psi(\eta)$ is constant.

\ssk

Let $B:= \{ \eta^{\smallfrown}\la \alpha \ra \mid \eta\in P_{q-1}(K_q),\ \alpha \in Y'(\eta) \}$.

\ssk

And define $K_{q-1}: = \{ \eta\in K_q \mid \eta$ is $\unlhd$-comparable with some element of $B \}$.

\ssk

Then $K_{q-1}$ is $\chi^+$-branching and occupies all levels $\leq n$, and is $(q-1)$-th height homogeneous for $f$. This completes the induction step and hence the proof for Case 1.

\bs

\nin \underline{Case 2} ($m>1$). We  prove it  by induction on $n$.

\ms

\underline{Base step} ($n=1$). Erd\H{o}s-Rado theorem  essentially states that $\mathcal{R}(1, m)\leq m-1$ for every $m\geq 1$.

\ms

\underline{Induction step}. Let $n\geq 1$ and assume that $(n, m)$ is a Ramsey pair for every $m>1$.

\ssk

Fix any $m>1$ and let $k:=\mathcal{R}(n, m)$. We  will prove $(n+1, m)$ is a Ramsey pair by showing $\mathcal{R}(n+1, m) \leq m(k+3) + 1$.

\ms

So let $\chi$ be any infinite cardinal and let $\lam:=\beth_{m(k+3)+1}(\chi)^+$. And fix any function
\[ f\colon ({}^{n+1\geq}\lambda)^m \imp \chi
\]

\ssk

We shall abbreviate  ${}^{n+1\geq}\lam$ by $K$.

\ssk

Let $\kk:=\beth_{m+1}(\chi)$ and consider the family of trees $\{ X_i \sbb K \mid i<\kk \}$ where
\[ X_i: = \{ \la i \ra^{\smallfrown}\eta \mid \eta\in {}^{n\geq}\lam \}
\]

Note that each $X_i$ is a $\lam$-branching tree of height $n$.

\ssk

For any subset $B\sbb m$ and any $m$-tuple $\etb:=(\eta_0, \cdd, \eta_{m-1})$ in $\bigcup_{i<\kk}X_i$, define an $m$-tuple $\etb_B:= (\epsilon_0, \cdd, \epsilon_{m-1})$ as
\[ \epsilon_i:=
\begin{cases}
\eta_i &\text{if $i\in B$}\\
\la \ra &\text{otherwise}
\end{cases}
\]

\ssk

Furthermore, let $f'$ be the function with domain $(\bigcup_{i<\kk}X_i)^m$, sending each $\etb\in (\bigcup_{i<\kk}X_i)^m$ to the finite string $\la f(\etb_B) \mid B\sbb m \ra$.

\ssk

Note $\chi < \kk$. So the  range of $f'$ has a size $< \kk$.

\ssk

Moreover,  observe:
\[ \beth_{m(k+3)+1}(\chi) = \beth_{m(k+2)}\beth_{(m+1)}(\chi) = \beth_{m(k+2)}(\kk)
\]

Now, recall that $(n, m)$ is a Ramsey pair and $k = \mathcal{R}(n, m)$. Furthermore, Theorem \ref{4} (and its proof) tells us that  $(n, m)$ is in fact a strong Ramsey pair and $\mathcal{R}'(n, m)\leq m(k+2)$.

\ssk

Hence, there exists a $\kk^+$-branching subtree $Y_i\sbb X_i$ (for each $i<\kk$) of height $n$ such that $\bigcup_{i<\kk} Y_i$ is $\pl$-homogeneous for $f'$. And, since  $\chi^+ < \kk^+$, we may as well assume that each $Y_i$ is $\chi^+$-branching.

\ms

Let $\I_1: = \{ \la \ra \} \cup \bigcup_{i<\kk}Y_i$.

\ms

For any $m$-tuple $\etb$ in $\bigcup_{i<\kk} Y_i$, define $\theta(\etb)\in \kk^m$ by $\theta(\etb): = ( \eta_0(0), \cdd, \eta_{m-1}(0))$.

\ms

The following two observations can be verified straightforwardly.

\ms

\nin \underline{Observation 1}.  $\I_1$ is `\emph{weakly} $\sim_\s$-homogeneous for $f$'. i.e., for any $m$-tuples  $\etb:=(\eta_0, \cdd, \eta_{m-1})$ and $\nub:=(\nu_0, \cdd, \nu_{m-1})$ in $\I_1$,
\[ \mbox{if}\quad (\etb\sim_\s \nub\ \mbox{and}\ \eta_i(0) = \nu_i(0)\ \mbox{for all $i<m$})\quad \mbox{then}\quad f(\etb) = f(\nub)
\]

\ms

\nin \underline{Observation 2}.  For any $m$-tuples $\etb, \nub$ in $\bigcup_{i<\kk}Y_i$,

\ssk

\be
\item $\etb \pl \nub\ \Leftrightarrow\ \etb\sim_\s \nub$\, and\, $\theta(\etb) = \theta(\nub)$.
\ssk

\item Suppose $\theta(\etb) = \theta(\nub)$. For any subset $B\sbb m$, if  $\etb_B = (\eta'_0, \cdd, \eta'_{m-1})$ and $\nub_B:=(\nu'_0, \cdd, \nu'_{m-1})$ then $\eta'_i(0) = \nu'_i(0)$ for all $i<m$. 

\ssk

\item whenever $\etb\sim_\s \nub$,
	\be
	\item $\theta(\etb)$ and $\theta(\nub)$ have the same order-type in $\kk$.
	
	\ssk
		
	\item $\etb_B \sim_\s \nub_B$ for every subset $B\sbb m$.
	
	\ssk
	
	\item If $\theta(\etb) = \theta(\nub)$ then $f(\etb_B) = f(\nub_B)$ for every subset $B\sbb m$.
\ee
\ee

\bs

\nin \underline{Claim}. There exists a size-$\chi^+$ subset $Z\sbb \kk$ such that $\I_2: = \{ \eta\in \I_1 \mid \eta(0)\in Z \}$  is $\sim_\s$-homogeneous for $f$.

\ms

(Note that such $\I_2\sbb K$ is $\chi^+$-branching and occupies all levels $\leq n+1$. Hence, proving this claim will finish the proof of the induction step and hence the proof for Case 2.)

\ms

\emph{Proof of Claim}. Let $\mathcal{C}$ denote the set of all $\sim_\s$-equivalent classes of $m$-tuples in $\bigcup_{i<\kk}Y_i$. Note $\| \mathcal{C} \|  = \aleph_0$.

\ssk

For any $\bar\alpha\in \kk^m$ and any subset $B\sbb m$, define a function 
\[ \mathcal{E}(\bar\alpha, B)\colon \mathcal{C} \imp \chi
\]
as follows: Let $\cc \in \mathcal{C}$. If there exists any $m$-tuple $\etb$ in  $\bigcup_{i<\kk}Y_i$ such that
\be
\item $\etb\in \cc$

\ssk

\item $\theta(\etb) = \bar\alpha$.
\ee
then define $\mathcal{E}(\bar\al, B)(\cc):= f(\etb_B)$. Otherwise, define $\mathcal{E}(\bar\al, B)(\cc):=0$.

\ms

$\mathcal{E}(\bar \alpha, B)$ is well-defined thanks to Observation 2(3)(c).

\ssk

Furthermore, let $\mathcal{E}'$ be the function with domain $\kk^m$, sending each $\bar\alpha\in \kk^m$ to the finite string $\la \mathcal{E}(\bar\alpha, B)\mid B\sbb m \ra$.

\ssk

Note that the range of $\mathcal{E}'$ has a size at most $2^\chi = \beth_1(\chi)$. Moreover,
\[ \kk = \beth_{m+1}(\chi)  \geq \beth_{m-1}(\beth_{1}(\chi))^+
\]

Hence, by Erd\H{o}s-Rado theorem, there exists a size-$(2^\chi)^+$ subset $Z\sbb \kk$ such that, for any $m$-tuples $\bar\alpha, \bar\beta$ in $\kk$, if they have the same order-type in $\kk$ then $\mathcal{E}'(\bar\alpha) = \mathcal{E}'(\bar\beta)$. And, since $\chi^+ < (2^\chi)^+$, we may as well assume $\| Z \| = \chi^+$.

\ssk

Let $\I_2:= \{ \eta\in \I_1\mid \eta(0)\in Z \}$.

\ssk

Note $\I_2$ is a $\chi^+$-branching subtree of $K$ occupying all levels $\leq n+1$.

\ssk

\nin \underline{Subclaim}. $\I_2$ is $\sim_\s$-homogeneous for $f$.

\ms

\nin \emph{Proof of Subclaim}. Let $\etb, \nub$ be any $\sim_\s$-equivalent $m$-tuples in $\I_2$. Then it's clear that we can find some $m$-tuples $\etb^*, \nub^*$ in $\bigcup_{i \in Z}Y_i$ and some subset $B\sbb m$ such that:
\be
\item $\etb^* \sim_\s \nub^*$.

\ssk

\item $\etb^*_B = \etb$\, and\, $\nub^*_B = \nub$.
\ee

\ssk

Then $\bar\alpha:=\theta(\etb^*)$ and $\bar\beta:=\theta(\nub^*)$ have the same order-type in $Z$, by Observation 2(3)(a). Hence, $\mathcal{E}'(\bar \alpha) = \mathcal{E}'(\bar{\beta})$. This implies that $\mathcal{E}(\bar\alpha, B) = \mathcal{E}(\bar \beta, B)$. Hence, if we let $\cc$ denote the $\sim_\s$-equivalence class to which both $\etb^*$ and $\nub^*$ belong, then $\mathcal{E}(\bar\alpha, B)(\cc) = \mathcal{E}(\bar \beta, B)(\cc)$. But $\mathcal{E}(\bar\alpha, B)(\cc) = f(\etb^*_B) = f(\etb)$ and $\mathcal{E}(\bar\beta, B)(\cc) = f(\nub^*_B) = f(\nub)$. Hence $f(\etb) = f(\nub)$. This completes the proof of Subclaim.

\ms

This completes the proof of Claim, and hence the induction step.
\end{proof}

\bibliographystyle{amsplain}
\bibliography{preref}

\end{document}